\documentclass[american, letterpaper]{article}

\usepackage[margin=1in]{geometry}
\setlength{\parindent}{0pt}
\setlength{\parskip}{5pt}
\makeatletter
\def\thm@space@setup{%
\thm@preskip=1em \thm@postskip=0pt
}
\makeatother

\usepackage{amsmath, amsthm, amssymb, bbold, mathtools}
\usepackage{graphicx}
\usepackage{acronym}
\usepackage{multirow}
\usepackage{import}
\usepackage{url}

\usepackage[
backend=bibtex,
style=authoryear,
natbib=true,
url=false, 
doi=false,
eprint=false,
dashed=false,
maxbibnames=99
]{biblatex}
\addbibresource{references.bib}

\usepackage{hyperref}
\usepackage[]{hyperref}
\hypersetup{
  colorlinks=false,
}

\usepackage[ruled,vlined]{algorithm2e}


\newtheorem{lemma}{Lemma}

\newtheorem{theorem}{Theorem}
\newtheorem{corollary}{Corollary}

\acrodef{qop}[QOP]{Quadratic Optimization Problem}
\acrodef{socp}[SOCP]{Second-Order Cone Problem}
\acrodef{sdp}[SDO]{Semidefinite Optimization}
\acrodef{mio}[MIO]{Mixed Integer Optimization}
\acrodef{cio}[CIO]{Convex Integer Optimization}
\acrodef{kkt}[KKT]{Karush-Kuhn-Tucker}
\acrodef{amp}[AMP]{Approximate Message-Passing}
\acrodef{tsp}[TSP]{Travelling Salesman Problem}
\acrodef{iid}[i.i.d.]{independent identically distributed}
\acrodef{snr}[SNR]{signal-to-noise ratio}

\newcommand{\norm}[1]{\left\|#1\right\|}
\newcommand{\conv}{\mathop{\operatorname{conv}}}
\newcommand{\abs}[1]{\left|#1\right|}

\newcommand{\eye}[1]{\mathbb{I}_{#1}}
\newcommand{\set}[2]{\left\{ #1\ : \ #2 \right\}}
\newcommand{\tpose}{^\top}
\newcommand{\ip}[2]{#1\tpose #2}
\newcommand{\defn}[0]{:=}

\newcommand{\mc}{\mathcal}
\renewcommand{\emph}{\textit}
\newcommand{\mb}{\mathbb}
\def\d{\mathrm{d}}

\def\Re{\mathrm{R}}

\def\one{\mb 1}
\renewcommand{\S}{\mathrm{S}}
\def\st{\mathrm{s.t.}}
\def\bigM{big-$\mc M$~}

\DeclareMathOperator{\supp}{supp}

\providecommand{\keywords}[1]{\textbf{\textit{Keywords: }} #1}

\usepackage[affil-it]{authblk}

\title{Sparse High-Dimensional Regression: Exact Scalable Algorithms and Phase Transitions}

\author[1]{Dimitris Bertsimas\thanks{\href{mailto:dbertsim@mit.edu}{dbertsim@mit.edu}}}
\author[1]{Bart \mbox{Van Parys}\thanks{\href{mailto:vanparys@mit.edu}{vanparys@mit.edu}}}

\affil[1]{Operations Research Center, Massachusetts Institute of Technology}

\date{}

\begin{document}
\maketitle

\begin{abstract}
  We present a novel  binary convex reformulation of the sparse regression problem that constitutes a new duality perspective. 
  We devise a new cutting plane method and provide evidence that it can solve to provable optimality the sparse regression problem for  sample sizes  $n$ and number of regressors $p$ in the 100,000s, that is two orders of magnitude better than the current state of the art, in seconds.  The ability to solve the  problem  for very high dimensions allows us to observe  new phase transition phenomena. Contrary to traditional complexity theory which suggests that the difficulty of a problem increases with problem size, the sparse  regression problem  
  has the property that as the number of samples  $n$ increases the problem becomes easier in that the solution recovers 100\% of the true signal, and 
  our approach solves the problem extremely fast (in fact faster than \texttt{Lasso}), while 
  for small number of samples $n$, our approach takes a larger amount of time to solve 
  the problem, but importantly the optimal solution provides a statistically more relevant regressor. We argue that our exact sparse regression approach presents a superior alternative over heuristic methods available at present. 
\end{abstract}

\keywords{Best Subset Selection, Sparse Regression, Kernel Learning, Integer Optimization, Convex Optimization}

\acresetall

\section{Introduction}
\label{sec:introduction}

Given input data $X = (x_1, \dots, x_n)\in \Re^{n\times p}$ and response data $Y = (y_1, \dots,  y_n)\in \Re^n$, the problem of   linear regression
with a \citet{tikhonov1943stability} regularization term and an explicit sparsity constraint
 is defined as
\begin{equation}
\label{eq:l0-regression:primal}
\begin{array}{rl}
	\displaystyle \min_w 	& \frac{1}{2\gamma} \norm{w}^2_2 + \frac{1}{2} \norm{Y - X w}_2^2 \\[0.5em]
	\st    	& \norm{w}_0 \leq k, \\
\end{array}
\end{equation}
where $\gamma>0$ is a given weight that controls the importance of the regularization term. 
The number of regression coefficients needed to explain the observations from the input data is limited to $k$ by the $\ell_0$-norm constraint on the regressor $w$. Tikhonov regularization helps to reduce the effect of noise in the input data. Regularization and robustness are indeed known to be intimately connected as shown for instance by \citet{bertsimas2009equivalence, xu2009robustness}. Evidently in practice, both the sparsity parameter $k$ and the Tikhonov regularization term $\gamma$ must ultimately be determined from the data. Cross validation has in practice been empirically found to be an effective method to determine both hyperparameters.

\subsection*{Background}
Problem \eqref{eq:l0-regression:primal} is  a  discrete optimization problem, which belongs to the class of $NP$-hard problems.  Motivated  by the apparent difficulty of the sparse regression formulation \eqref{eq:l0-regression:primal}, much of the literature  until recently has largely ignored the exact discrete formulation and rather focused on heuristic approaches. Historically, the first heuristics methods for sparse approximation seem to have arisen in the signal processing community (c.f.\ the work of \citet{mallat1993matching} and references therein) and typically are of an iterative thresholding type. More recently, one popular class of sparse regression heuristics solve convex surrogates to the sparse regression formulation \eqref{eq:l0-regression:primal}. There is an elegant theory for such schemes promising large improvements over the more myopic iterative thresholding methods. Indeed, a truly impressive amount of high-quality work \citep{buhlmann2011statistics, hastie2015statistical, wainwright2009sharp} has been written on characterizing when exact solutions can be recovered, albeit through making strong assumptions on the data.

One such heuristic based on a convex proxy related to our formulation and particularly worthy of mention is the \texttt{Elastic Net} developed by \citet{zou2005regularization}. One particular canonical form of the \texttt{Elastic Net} heuristic solves the proxy convex optimization problem
\begin{equation}
\label{eq:l1-regression:primal}
\begin{array}{rl}
	\displaystyle \min_w 	& \frac{1}{2\gamma} \norm{w}^2_2 + \frac{1}{2} \norm{Y - X w}_2^2 \\[0.5em]
	\st    	& \norm{w}_1 \leq \lambda,
\end{array}
\end{equation}
where the $\ell_1$-norm constraint shrinks the regressor coefficients towards zero thus encouraging sparse regressors for $\lambda$ tending to zero. When disregarding the Tikhonov regularization term, the popular \texttt{Lasso} heuristic introduced by \citet{tibshirani} is recovered. 
An important factor in favor of heuristics such as \texttt{Lasso} and \texttt{Elastic Net} are their computational feasibility and scalability. Indeed, problem \eqref{eq:l1-regression:primal} can be solved efficiently and mature software implementations such as \texttt{GLMNet} by \citet{friedman2013glmnet} are available.

Despite all of the aforementioned positive properties, proxy based methods such as \texttt{Lasso} and \texttt{Elastic Net}  do have several innate shortcomings. These shortcomings are well known in the statistical community too. First and foremost, as argued in \citep{bertsimas2014statistics} they do not recover very well the sparsity pattern. Furthermore, the \texttt{Lasso} leads to biased regression regressors, since the $\ell_1$-norm penalizes both large and small coefficients uniformly. In sharp contrast, the $\ell_0$-norm sparsifies the regressor without conflating the effort with unwanted shrinking. 

For a few decades the exercise of trying to solve the sparse regression problem \eqref{eq:l0-regression:primal} at a practical scale was branded hopeless. \citet{bixby2012brief} noted however that in the last twenty-five years the computational power of \ac{mio} solvers has increased at an astonishing rate. Riding on the explosive improvement of \ac{mio} formulations, \citet{bertsimas2014statistics} achieved to solve the sparse regression problem \eqref{eq:l0-regression:primal} for  problem instances of dimensions $n$, $p$ in the 1000s. Using a \bigM formulation of the cardinality constraint, the sparse regression problem \eqref{eq:l0-regression:primal} can indeed be transformed into the \ac{mio} problem
\begin{equation}
\label{eq:bigm:primal}
	\begin{array}{rl}
	\min 	& \frac{1}{2\gamma} \norm{w}_2^2 + \frac{1}{2} \norm{Y - X w}_2^2 \\[0.5em]
	\st    	& w \in \Re^p, ~s \in \S^p_k \\[0.5em]
		& -\mc M s_j \leq w_j \leq \mc M s_j,  \hspace{0.60cm} \forall j \in [p].
	\end{array} 
\end{equation}
With the help of the binary set $\S^p_k \defn \set{s\in\{0, 1\}^p}{\one\tpose s \leq k}$, the constraint in \eqref{eq:bigm:primal} ensures that the regression coefficient $w_j$ is nonzero only if the selection variable $s_j=1$ for a sufficiently large constant $\mc M$. The constant $\mc M$ must be estimated from data as outlined in \cite{bertsimas2014statistics} to ensure the equivalence between the sparse regression problem \eqref{eq:l0-regression:primal} and its \ac{mio} formulation \eqref{eq:bigm:primal}. This \ac{mio} approach is significantly more scalable than the leaps and bounds algorithm outlined in \cite{furnival2000regressions}, largely because of the advances in computer hardware, the improvements in \ac{mio} solvers, and the specific warm-start techniques developed by \cite{bertsimas2014statistics}. Even so, many problems of practical size are still far beyond the scale made  tractable through this approach.

\subsection*{A scalable perspective}

Although a direct \bigM formulation of the sparse regression problem results in a well posed \ac{mio} problem, the constant $\mc M$ needs to be chosen with  care as not to impede its numerical solution. The choice of this data dependent constant $\mc M$ indeed affects the strength of the \ac{mio} formulation \eqref{eq:bigm:primal} and is critical for obtaining solutions quickly in practice. Furthermore, as the regression dimension $p$ grows, explicitly constructing the \ac{mio} problem \eqref{eq:bigm:primal}, let alone solving it, becomes burdensome. In order to develop an exact scalable method to the sparse regression problem \eqref{eq:l0-regression:primal} capable of solving problem instances of sample size $n$ and regressor dimension in the 100,000s, a different perspective on sparse regression is needed.

The \bigM formulation \eqref{eq:bigm:primal} of the sparse linear regression problem \eqref{eq:l0-regression:primal} takes on a primal perspective to regression. Like most exact as well as heuristic sparse regression formulations, the \bigM formulation \eqref{eq:bigm:primal} indeed tries to solve for the optimal regression coefficients $w_0^\star$ in \eqref{eq:l0-regression:primal} directly. However, it is well known in the kernel learning community that often far deeper results can be obtained if a dual perspective is taken. We show that this dual perspective can be translated to a sparse regression context as well and offers a novel road to approach exact sparse regression. Taking this new perspective, sparse regression problem \eqref{eq:l0-regression:primal} can be reduced to a pure integer convex optimization problem avoiding the  construction of any auxiliary constants.

Crucially, a tailored cutting plane algorithm for the resulting \ac{cio} problem renders solving the sparse regression problem \eqref{eq:l0-regression:primal} to optimality tractable for problem instances with number of samples and regressors in the 100,000s. That is two orders of magnitude better than the current state of art and impeaches the primary selling point of heuristic approaches such as \texttt{Elastic Net} or \texttt{Lasso}. 
As we will discuss subsequently, our cutting plane algorithm is often comparable or indeed even faster than the aforementioned convex proxy heuristic approaches.

\subsection*{Phase Transitions}

Let the data come from $Y = X w_\mathrm{true} + E$ where $E$ is zero mean noise uncorrelated with the signal $X w_\mathrm{true}$, then we define the accuracy and false alarm rate of a certain  solution $w^\star$ in recovering the correct support as: 
$$A \% \defn 100\times\frac{\abs{ \supp(w_\mathrm{true}) \cap \supp(w^\star) }}{k}$$ and $$F\% \defn 100\times\frac{\abs{ \supp(w^\star) \setminus \supp(w_\mathrm{true}) }}{\abs{\supp(w^\star)}}.$$ Perfect support recovery occurs only then when $w^\star$ tells the whole truth ($A\%=100$) and nothing but the truth ($F\%=0$).

The ability to recover the support of the ground truth $w_{\mathrm{true}}$ of the \texttt{Lasso} heuristic \eqref{eq:l1-regression:primal} for some value of $\lambda$ was shown by \citet{donoho2009observed} to experience a phase transition.
The phase transition described by \citet{donoho2009observed} concerns the ability of the \texttt{Lasso} solution $w_1^\star$ to coincide in support with the ground truth $w_{true}$. This accuracy  phase transition for the \texttt{Lasso} has been extensively studied in \citep{buhlmann2011statistics, hastie2015statistical, wainwright2009sharp} and is considered well understood by now. That being said, the assumptions made on the data needed for a theoretical justification of such phase transition are quite stringent and often of limited practical nature. For instance, \citet{wainwright2009sharp} showed that for observations $Y$ and independent Gaussian input data $X$ a phase transition occurs at the phase transition curve
\begin{equation}
  \label{eq:wainwright}
  n_1 = (2k+\sigma^2) \log (p-k),
\end{equation}
where $\sigma$ presents the noise level corrupting the observations.      
In the regime $n > n_1$ exact recovery of the support occurs with high-probability, while on the other side of the transition curve the probability for successful recovery drops to zero. Nonetheless, this  phase transition from accurate discovery to statistical meaninglessness  has been widely observed empirically \citep{donoho2009observed, donoho2006breakdown} even under conditions in which these assumptions are severely violated.

For exact sparse regression \eqref{eq:l0-regression:primal} a similar phase transition has been observed by \citet{zheng2015does} and \citet{wang2011performance}, although this transition is far less studied from a theoretical perspective than the similar transition for its heuristic counterpart. It is however known that the accuracy phase transition for exact sparse regression must occur even sooner than that of any heuristic approach. That is, exact sparse regression \eqref{eq:l0-regression:primal} yields statistically more meaningful optima than for instance the convex \texttt{Lasso} heuristic \eqref{eq:l1-regression:primal} does. Recently \citet{gamarnik2017high}, motivated by the results of the present paper, showed that when the regression coefficients are binary, a phase transition occurs at
\begin{equation}
  \label{eq:gamarnik}
  n_0 = 2 k \log{p} / \log\left(\frac{2k}{\sigma^2} + 1\right).
\end{equation}
Empirical verification of this phase transition was historically hindered due to the lack of exact scalable algorithms. Our novel cutting plane algorithm lifts this hurdle and opens the way to show the benefits of exact sparse regression empirically.

More importantly, we present strong empirical evidence that a computational phase transition occurs as well. Specifically, there is a phase transition concerning our ability to solve the sparse regression problem \eqref{eq:l0-regression:primal} efficiently. In other words, there is a phase transition in our ability to recover the true coefficients of the sparse regression problem and most surprisingly in our ability to find them fast. This complexity phase transition does not seem to be reported before and sheds a new light on the complexity of sparse  linear regression. 
Contrary to traditional complexity theory which suggests that the difficulty of a problem increases with size, the sparse  regression problem \eqref{eq:l0-regression:primal} has the property that for a small number of samples $n < n_t$, our approach takes a large amount of time to solve the problem. However, for a large number of samples $n > n_t$, our approach solves 
the problem extremely fast and perfectly recovers the support of the true regressor $w_{\mathrm{true}}$ fully. The complexity phase transition occurs between the theoretically minimum amount of samples $n_0 < n_t$ needed by exact sparse regression, there remains some hardness to the problem after all, but occurs crucially before $n_t<n_1$ the \texttt{Lasso} heuristic provides statically meaningful regressors. 

Lastly, recall that the accuracy phase transition \eqref{eq:wainwright} for \texttt{Lasso} and its counterpart \eqref{eq:gamarnik} for exact sparse regression are applicable only then when the true sparsity $k$ is known. Evidently in practice, the sparsity parameter $k$ must ultimately be determined from the data. Most commonly this is done using cross validation. Incorrect determination of this parameter most often leads to elevated false alarm rates. Crucially, we show that in this regard only exact sparse regression experiences a phase transitions in its ability to select only the relevant features. Lasso always seems to favor adding irrelevant features in an attempt to improve its prediction performance. We will show that exact regression is significantly better than \texttt{Lasso} in discovering all true relevant features ($A\%= 100$), while truly outperforming its ability to reject the obfuscating ones ($F\%= 0$).

\subsection*{Contributions and structure}
\begin{enumerate}
\item In Section \ref{sec:sparse_regression}, we propose a novel  binary convex reformulation of the sparse regression problem \eqref{eq:l0-regression:primal} that represents a new dual perspective to the problem. 
The reformulation does not use the \bigM constant present in the primal  formulation \eqref{eq:bigm:primal}. In Section \ref{sec:cutting_plane}, we devise a novel cutting plane method and provide evidence that it can solve the sparse regression problem for  sizes of $n$ and $p$ in the 100,000s. That is two orders of magnitude than what was achieved in \citep{bertsimas2014statistics}. The empirical computational results in this paper do away with the long held belief that exact sparse regression for practical problem sizes is a lost cause.
\item The ability to solve the sparse regression problem \eqref{eq:l0-regression:primal}  for very high dimensional problems allows us to observe properties of the problem 
that demonstrate new phase transition phenomena.
Specifically,  we demonstrate experimentally  in Section \ref{sec:empirical_results} that  there is a threshold $n_t$ such that if $n \geq n_t$, then $w^\star_0$ recovers the true support ($A\%=100$ for $F\%=0$) and the time to solve problem \eqref{eq:l0-regression:primal}  is 
 seconds 
(for $n$ and $p$ in 100,000s) and it only grows only linear in $n$. Remarkably, these times are less than the time to solve \texttt{Lasso} for similar sizes. 
Moreover, if $n < n_t$, then the time to solve problem \eqref{eq:l0-regression:primal} grows proportional to $\binom{p}{k}$. In other words, there is a phase transition in our ability to recover the true coefficients of the sparse regression problem and most surprisingly in our ability to solve it. Contrary to traditional complexity theory that suggests that the difficulty of a problem increases with dimension, the sparse  regression problem \eqref{eq:l0-regression:primal} 
has the property that for small number of samples $n$, our approach takes a  large amount of time to solve the problem, but most importantly the optimal solution does not recover the true signal. However, for a large number of samples $n$, our approach solves 
the problem extremely fast and recovers $A\%=100$ of the support of the true regressor $w_{\mathrm{true}}$.  Significantly, the threshold $n_t$ for the phase transition for full recovery 
of exact sparse regression is significantly smaller than the corresponding threshold $n_1$ for  \texttt{Lasso}. Whereas \texttt{Lasso} tends to furthermore include many irrelevant features as well, exact sparse regression furthermore achieves this full recovery at almost $F\%=0$ false alarm rate.
\item We are able to generalize in Section \ref{sec:nonlinear} our approach to sparse kernel regression. We believe that this nonlinear approach can become a fierce and more disciplined competitor compared to  ``black box'' approaches such as neural networks.
\end{enumerate}

\subsection*{Notation}

Denote with $[n]$ the set of integers ranging from one to $n$. The set $\S^p_k$ denotes the   set $$\S_k^p\defn\set{s \in \{0,1\}^p}{\one\tpose s\leq k},$$  which contains all binary vectors $s$ selecting $k$ components from $p$ possibilities. Assume that $(y_1, \dots, y_p)$ is a collection of elements and suppose that $s$ is an element of $\S^p_k$, then $y_s$ denotes the sub-collection of $y_j$ where $s_j = 1$. We use $\norm{x}_0$ to denote the number of elements of a vector $x$ in $\Re^p$ which are nonzero. Similarly, we use $\supp(x) = \set{s \in \{0, 1\}^p}{s_i = 1 \iff x_i \neq 0} $ to denote those indices of a vector $x$ which are nonzero. Finally, we denote by $\S_+^n$ ($\S_{++}^n$) the cone of $n\times n$ positive semidefinite  (definite) matrices.

\section{A convex binary reformulation of sparse linear regression}
\label{sec:sparse_regression}

Sparse regression taken at face value is recognized as a mixed continuous and discrete optimization problem. Indeed, the sparse regressor $w$ as an optimization variable in \eqref{eq:l0-regression:primal} takes values in a continuous subset of $\Re^p$. The $\ell_0$-norm sparsity constraint, however, adds a discrete element to the problem. The support $s$ of the sparse regressor $w$ is discrete as it takes values in the binary set $\S^p_k=\set{s \in \{0,1\}^p}{\one\tpose s\leq k}$. It should not come as a surprise then that the reformulation \eqref{eq:bigm:primal} developed by \citet{bertsimas2014statistics} formulates the sparse regression problem as a \ac{mio} problem.  
 
For the reasons outlined in the introduction of this paper, we take a different approach to the sparse regression problem \eqref{eq:l0-regression:primal} entirely. To that end we first briefly return to the ordinary regression problem for which any sparsity considerations are ignored and in which a linear relationship between input data $X$ and observations $Y$ is determined through solving the least squares regression problem
\begin{equation}
\label{eq:regression:primal}
\begin{array}{rl}
	c \defn \min 	& \frac{1}{2\gamma} \norm{w}_2^2 + \frac{1}{2} \norm{Y - X w}^2_2 \\[0.5em]
	\st    	& w \in \Re^p.
\end{array}
\end{equation}
We will refer to the previously defined quantity $c$ as the regression loss. The quantity $c$ does indeed agree with the regularized empirical regression loss for the optimal linear regressor corresponding to the input data $X$ and response $Y$.
We point out now that the regression loss function $c$ is convex as a function of the outer product $X X\tpose$ and furthermore show that it admits an explicit characterization as a semidefinite representable function. 

\begin{lemma}[The regression loss function $c$]
\label{lemm:convexity}
The regression loss $c$ admits the following explicit characterizations
\begin{align}
		c 	&= \frac12 Y\tpose \left( \eye{n} -  X \left(\eye{p}/\gamma + X\tpose X\right)^{-1} X\tpose \right) Y, \label{eq:char1} \\[0.5em]
			&= \frac12 Y\tpose \left(\eye{n} + \gamma X X\tpose \right)^{-1} Y. \label{eq:char2} \\
\intertext{Futhermore, the regression loss $c$ as a function of the kernel matrix $X X\tpose$ is conic representable using the formulation}
		c(X X\tpose)	&=\min \set{\eta\in \Re_+}{\begin{pmatrix} 2 \eta & Y\tpose \\ Y & \eye{n} + \gamma X X\tpose \end{pmatrix} \in \S^{n+1}_+}. 
		\label{eq:char3} 
\end{align}
\end{lemma}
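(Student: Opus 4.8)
The plan is to exploit that the problem \eqref{eq:regression:primal} defining $c$ is an \emph{unconstrained} strongly convex quadratic program: because $\gamma>0$, the Hessian $\eye{p}/\gamma + X\tpose X$ is positive definite, so the objective has a unique minimizer characterized by a vanishing gradient. Setting the gradient to zero gives the regularized normal equations $(\eye{p}/\gamma + X\tpose X)\, w^\star = X\tpose Y$, hence $w^\star = (\eye{p}/\gamma + X\tpose X)^{-1} X\tpose Y$.

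To reach \eqref{eq:char1} I would not expand the two quadratic terms directly but instead reuse the stationarity identity $\tfrac1\gamma w^\star = X\tpose(Y - X w^\star)$. This rewrites the regularization term as $\tfrac1{2\gamma}\norm{w^\star}_2^2 = \tfrac12 (X w^\star)\tpose(Y - X w^\star)$, so the optimal value telescopes to $c = \tfrac12 (Y - X w^\star)\tpose Y$; substituting $X w^\star = X(\eye{p}/\gamma + X\tpose X)^{-1} X\tpose Y$ then yields \eqref{eq:char1}. The second characterization \eqref{eq:char2} follows from a single application of the Woodbury matrix-inversion identity to $(\eye{n} + \gamma X X\tpose)^{-1}$ with inner factor $\gamma \eye{p}$, which reproduces $\eye{n} - X(\eye{p}/\gamma + X\tpose X)^{-1} X\tpose$.

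Finally, \eqref{eq:char3} follows from \eqref{eq:char2} by a Schur-complement argument. Writing $S \defn \eye{n} + \gamma X X\tpose$, observe $S \succ 0$ since $\eye{n} \succ 0$ and $\gamma X X\tpose \succeq 0$; therefore the displayed $(n+1)\times(n+1)$ block matrix is positive semidefinite precisely when the scalar Schur complement $2\eta - Y\tpose S^{-1} Y$ is nonnegative, i.e.\ when $\eta \geq \tfrac12 Y\tpose S^{-1} Y = c$. The least feasible $\eta$ is thus $c$ itself, and restricting to $\eta \in \Re_+$ changes nothing because $S^{-1} \succ 0$ forces $c \geq 0$. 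Convexity of $c$ as a function of the kernel $X X\tpose$ is then immediate: the feasible set in \eqref{eq:char3} is cut out by a linear matrix inequality affine in $X X\tpose$, and partial minimization of the linear objective $\eta$ over a convex set preserves convexity.

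The individual steps are standard; the one place demanding care is the stationarity shortcut, since a direct substitution of $w^\star$ into both squared terms is algebraically heavy and error-prone, whereas the identity $\tfrac1\gamma w^\star = X\tpose(Y - X w^\star)$ makes the collapse transparent. The other natural pitfall is invoking Woodbury with the correct inner factor so that the $1/\gamma$ lands in the $p\times p$ block; getting this scaling right is exactly what aligns \eqref{eq:char1} and \eqref{eq:char2}.
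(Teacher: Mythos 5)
Your proposal is correct and follows essentially the same route as the paper's proof: stationarity (normal equations) to obtain \eqref{eq:char1}, the Woodbury matrix-inversion identity to pass to \eqref{eq:char2}, and a Schur-complement argument for \eqref{eq:char3}. The only difference is cosmetic — your telescoping identity $\tfrac1\gamma w^\star = X\tpose(Y - Xw^\star)$ is a tidy way of carrying out the back-substitution that the paper simply asserts.
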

\begin{proof}
As the minimization problem \eqref{eq:regression:primal} over $w$ in $\Re^p$ is an unconstrained \ac{qop}, the optimal value $w^\star$ satisfies the linear relationship $(\eye{p}/\gamma + X\tpose X)w^\star = X\tpose Y.$ Substituting the expression for the optimal linear regressor $w^\star$ back into optimization problem, we arrive at 
\[
	c = \frac12 Y\tpose Y - \frac12 Y\tpose X \left(\eye{p}/\gamma + X\tpose X\right)^{-1} X\tpose Y
\]
establishing the first explicit characterization \eqref{eq:char1} of the regression function $c$.  The second characterization \eqref{eq:char2} can be derived from the first with the help of the matrix inversion lemma found in \cite{hager1989updating} stating the identity 
\[
	\left(\eye{n} + \gamma X X\tpose\right)^{-1} = \eye{n} - X \left( \eye{p}/\gamma + X\tpose X\right)^{-1} X\tpose.
\]
The Schur complement condition discussed at length in \cite{zhang2006schur} guarantees that as $\eye{n} + \gamma X X\tpose$ is strictly positive definite, we have the equivalence
\[
	2 \eta \geq Y\tpose \left(\eye{n} + \gamma X X\tpose\right)^{-1} Y \iff \begin{pmatrix} 2 \eta & Y\tpose \\ Y & \eye{n} + \gamma X X\tpose \end{pmatrix} \in \S^{n+1}_+.
\]%
Representation \eqref{eq:char3} is thus an immediate consequence of expression \eqref{eq:char2} as well.
\end{proof}

We next   establish that the sparse regression problem \eqref{eq:l0-regression:primal} can in fact be represented as a pure binary  optimization problem. The following result provides a novel perspective on the sparse regression problem \eqref{eq:l0-regression:primal} and is of central importance in the paper.

\begin{theorem}[Sparse linear regression]
\label{thm:cio}
The sparse regression problem \eqref{eq:l0-regression:primal} can be reformulated as the nonlinear optimization problem 
\begin{equation}
\label{eq:opt:miop:kernel}
\begin{array}{rl}
	\min 	& \displaystyle \frac12 Y\tpose \left(\eye{n} + \gamma \textstyle\sum_{j\in[p]} s_j K_j \right)^{-1} Y \\[0.6em]
	\st    	& s\in \S^p_k,
\end{array}
\end{equation}
where the micro kernel matrices $K_j$ in $\S^n_+$ are defined as the dyadic products
\begin{equation}
\label{eq:kernel}
	\textstyle K_j \defn X_j X_j \tpose.
\end{equation}
\end{theorem}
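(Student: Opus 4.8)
The plan is to reduce the mixed continuous--discrete problem \eqref{eq:l0-regression:primal} to a pure binary problem by \emph{splitting the minimization over $w$ into an outer choice of support and an inner unconstrained least-squares fit}, and then invoking Lemma \ref{lemm:convexity} on the inner problem. First I would rewrite the feasible set: every $w$ with $\norm{w}_0 \leq k$ has support $\supp(w) \in \S^p_k$, and conversely any $w$ that is supported on some $s \in \S^p_k$ satisfies $\norm{w}_0 \leq \one\tpose s \leq k$. Hence the single minimization over $\set{w}{\norm{w}_0 \leq k}$ equals the nested minimization
\[
	\min_{s \in \S^p_k} ~ \min_{w \,:\, w_j = 0 \text{ if } s_j = 0} ~ \tfrac{1}{2\gamma}\norm{w}_2^2 + \tfrac12 \norm{Y - X w}_2^2,
\]
the two inequalities following at once by evaluating each side at the optimizer of the other.

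Next I would analyze the inner problem for a fixed $s \in \S^p_k$. Writing $X_s$ for the submatrix of columns $X_j$ with $s_j = 1$ and $w_s$ for the corresponding subvector, the support constraint gives $X w = X_s w_s$ and $\norm{w}_2^2 = \norm{w_s}_2^2$, so the inner problem is exactly the ordinary regression problem \eqref{eq:regression:primal} with the reduced design matrix $X_s$ in place of $X$. By the second characterization \eqref{eq:char2} of Lemma \ref{lemm:convexity}, its optimal value is $\tfrac12 Y\tpose (\eye{n} + \gamma X_s X_s\tpose)^{-1} Y$.

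The final ingredient is the dyadic identity $X_s X_s\tpose = \sum_{j \,:\, s_j = 1} X_j X_j\tpose = \sum_{j \in [p]} s_j K_j$, which expresses the reduced kernel matrix as a binary combination of the micro kernels $K_j = X_j X_j\tpose$ from \eqref{eq:kernel}. Substituting this into the inner optimal value and minimizing over $s \in \S^p_k$ produces precisely the objective and feasible set of \eqref{eq:opt:miop:kernel}, completing the reformulation.

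The argument is almost entirely bookkeeping, since the analytic heavy lifting (the matrix-inversion and Schur-complement manipulations) has already been discharged in Lemma \ref{lemm:convexity}. The one place that demands care is verifying that Lemma \ref{lemm:convexity} genuinely applies to the reduced problem, i.e.\ that restricting $w$ to the support indexed by $s$ leaves both the Tikhonov penalty and the residual of the form \eqref{eq:regression:primal} with design matrix $X_s$. I would also double-check the equivalence of the nested minimization under the inequality $\one\tpose s \leq k$ rather than equality, since it is exactly this slack that lets a minimizer $w$ whose support has size strictly below $k$ be absorbed into some $s \in \S^p_k$. Beyond these checks no further difficulty arises.
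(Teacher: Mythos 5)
Your proposal is correct and follows essentially the same route as the paper's proof: decompose the minimization over $w$ into an outer minimization over the support $s \in \S^p_k$ and an inner ordinary regression over the nonzero entries $w_s$, invoke Lemma \ref{lemm:convexity} to evaluate the inner problem as $c(X_s X_s\tpose)$, and finish with the dyadic identity $X_s X_s\tpose = \sum_{j\in[p]} s_j K_j$. Your additional care in verifying both directions of the nested-minimization equivalence and the harmless slack in $\one\tpose s \leq k$ is a fine elaboration of steps the paper treats as evident, not a different argument.
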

\begin{proof}
We start the proof by separating the optimization variable $w$ in the sparse regression problem \eqref{eq:l0-regression:primal} into its support $s \defn \supp{w}$ and the corresponding non-zero entries $w_s$. Evidently, we can now write the sparse regression problem \eqref{eq:l0-regression:primal} as the bilevel minimization problem 
\begin{equation}
\label{eq:reformulation}
\min_{s\in\S^p_k}\left[\min_{w_s \in \Re^k} ~ \frac{1}{2\gamma} \norm{w_s}^2_2 + \frac{1}{2} \norm{Y - X_s w_s}_2^2 \right].
\end{equation}
It now remains to be shown that the inner minimum can be found explicitly as the objective function of the optimization problem \eqref{eq:opt:miop:kernel}. Using Lemma \ref{lemm:convexity}, the minimization problem can be reduced to the binary minimization problem $\min_s \set{c(X_s X_s\tpose)}{s\in\S^p_k}$. We finally remark that the outer product can be decomposed as the sum $$X_s X_s\tpose = \textstyle\sum_{j\in[p]} s_j X_j X_j\tpose,$$ thereby completing the proof.
\end{proof}

An alternative to  the sparse regression problem \eqref{eq:l0-regression:primal}   is to consider  
the  penalized form of the  sparse regression problem: 
\begin{equation}
  \label{probl:penalized:l0}
  \begin{array}{rl}
    \displaystyle\min_{w\in \Re^p} & \frac12 \norm{Y- Xw}_2^2 + \frac{1}{2\gamma} \norm{w}_2^2 + \lambda \norm{w}_0,
  \end{array}
\end{equation}
in which the $\ell_0$-norm constraint is migrated to the objective function.  Analogously to  Theorem \ref{thm:cio} we can show that 
problem \eqref{probl:penalized:l0} can be reformulated as the nonlinear optimization problem 
\begin{equation*}
\begin{array}{rl}
	\min 	& \displaystyle \frac12 Y\tpose \left(\eye{n} + \gamma \textstyle\sum_{j\in[p]} s_j K_j \right)^{-1} Y  + \lambda \cdot \one\tpose s  \\[0.5em]
        \st 	& s \in \{0, 1\}^p.\\[0.5em]
\end{array}
\end{equation*}
While we do not need to  pre-specify $k$ in problem \eqref{probl:penalized:l0}, we need to specify the penalty $\lambda$ instead. 

The optimization problem \eqref{eq:opt:miop:kernel} is a pure binary formulation of the sparse regression problem directly over the support $s$ instead of the regressor $w$ itself. As the objective function in \eqref{eq:opt:miop:kernel} is convex in the vector $s$,  problem \eqref{eq:opt:miop:kernel} casts the sparse regression problem as a \ac{cio} problem. Nevertheless, we will never explicitly construct the \ac{cio} formulation as such and rather develop in Section \ref{sec:cutting_plane} an efficient cutting plane algorithm. 
We finally discuss here how the sparse regression formulation in Theorem \ref{thm:cio} is related to kernel regression and admits an interesting dual relaxation.

\subsection{The kernel connection}

In ordinary linear regression a linear relationship between input data $X$ and observations $Y$ is determined through solving the least squares regression problem \eqref{eq:regression:primal}. The previous optimization problem is known as Ridge regression as well and balances the least-squares prediction error with a Tikhonov regularization term. One can solve the Ridge regression problem in the primal space -- the space of parameters $w$ -- directly. Ridge regression is indeed easily recognized to be a convex \ac{qop}. Ordinary linear regression problems can thus be formulated as \ac{qop}s
 of size linear in the number of regression coefficients $p$. 

Correspondingly, the \bigM formulation \eqref{eq:bigm:primal} can be regarded as a primal perspective on the sparse regression problem \eqref{eq:l0-regression:primal}. Formulation \eqref{eq:bigm:primal} indeed attempts to solve the sparse regression problem in the primal space of parameters $w$ directly. 

However, it is well known in the kernel learning community that far deeper results can be obtained if one approaches regression problems from its convex dual perspective due to \citet{vapnik1998support}. Indeed, in most of the linear regression literature the dual perspective is often preferred over its primal counterpart. We state here the central result  in this context to make the exposition self contained.

\begin{theorem}[{\citet{vapnik1998support}}]
\label{thm:vapnik}
The primal regression problem \eqref{eq:regression:primal} can equivalently be formulated as the unconstrained maximization problem 
\begin{equation}
\label{eq:regression:dual}
\begin{array}{rl}
	c = \max	& -\frac{\gamma}{2} \alpha\tpose K \alpha  - \frac{1}{2} \alpha\tpose \alpha + Y\tpose \alpha  \\[0.5em]
	\st    	& \alpha \in \Re^n, \\
\end{array}
\end{equation}
where the kernel matrix $K = X X\tpose$ in $\S^n_+$ is a positive semidefinite matrix.
\end{theorem}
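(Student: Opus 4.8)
The plan is to derive the dual \eqref{eq:regression:dual} from the primal \eqref{eq:regression:primal} via Lagrangian duality, which is the classical route and the one most in keeping with the kernel learning perspective emphasized here. The only awkwardness in applying duality directly is that the objective in \eqref{eq:regression:primal} couples $w$ through the nonlinear term $\norm{Y-Xw}_2^2$; the standard device is to decouple it by introducing a residual variable.

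First I would introduce the auxiliary variable $r \defn Y - X w$ and rewrite \eqref{eq:regression:primal} as the equality-constrained problem of minimizing $\frac{1}{2\gamma}\norm{w}_2^2 + \frac12\norm{r}_2^2$ subject to $r = Y - Xw$. This reformulation is exactly equivalent and makes the objective separable in $(w,r)$ while isolating all the coupling in a single affine constraint. Next I would attach a multiplier $\alpha \in \Re^n$ to the constraint and form the Lagrangian $L(w,r,\alpha) = \frac{1}{2\gamma}\norm{w}_2^2 + \frac12\norm{r}_2^2 + \alpha\tpose(Y - Xw - r)$. Because $L$ is an unconstrained strictly convex quadratic in each of $w$ and $r$, the inner minimization is available in closed form: the stationarity conditions give $w = \gamma X\tpose\alpha$ and $r = \alpha$. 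Substituting these back and simplifying — here the term $\frac{1}{2\gamma}\norm{\gamma X\tpose\alpha}_2^2$ collapses to $\frac{\gamma}{2}\alpha\tpose K\alpha$ using $K = XX\tpose$ — yields precisely the concave objective $-\frac{\gamma}{2}\alpha\tpose K\alpha - \frac12\alpha\tpose\alpha + Y\tpose\alpha$ of \eqref{eq:regression:dual}.

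The one step that requires care is the claim that the dual optimal value equals $c$ with no duality gap. Since the reformulated primal is a convex program whose only constraint is affine and which is trivially feasible, strong duality holds without any further constraint qualification, so $c$ coincides with the optimal dual value. The main obstacle is therefore purely the conceptual justification of this zero-gap property rather than any computation.

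Alternatively — and this is the cleanest way to close the argument rigorously — I would sidestep the duality theorem entirely and verify the identity of optimal values directly. The dual objective is concave with gradient $-\gamma K\alpha - \alpha + Y$, so its maximizer is $\alpha^\star = (\eye{n} + \gamma K)^{-1} Y$; substituting $\alpha^\star$ back reduces the dual optimal value to $\frac12 Y\tpose(\eye{n}+\gamma K)^{-1}Y$, which is exactly the characterization \eqref{eq:char2} of $c$ established in Lemma \ref{lemm:convexity}. This direct verification confirms equality of the two optima and requires nothing beyond the already-proven Lemma \ref{lemm:convexity}.
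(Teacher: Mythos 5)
Your proof is correct, but note that the paper itself offers no proof of this theorem: it is stated as a classical result and attributed to \citet{vapnik1998support} purely to keep the exposition self-contained, so there is no in-paper argument to compare against. Of the two routes you give, both are sound. The Lagrangian route (introducing $r = Y - Xw$, forming $L(w,r,\alpha)$, and eliminating $w = \gamma X\tpose\alpha$ and $r = \alpha$) is the classical derivation, and you correctly observe that strong duality requires no constraint qualification beyond feasibility because the only constraint is affine. Your second, direct argument is the one most in the spirit of the paper: the dual objective is strictly concave (its Hessian is $-(\eye{n}+\gamma K)$, which is negative definite since $K \in \S^n_+$), its unique maximizer is $\alpha^\star = (\eye{n}+\gamma K)^{-1}Y$, and substituting back gives the optimal value $\frac12 Y\tpose(\eye{n}+\gamma K)^{-1}Y$, which is exactly characterization \eqref{eq:char2} of Lemma \ref{lemm:convexity}. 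This verification uses nothing outside the paper and is also consistent with how the paper later exploits the theorem; in particular, Lemma \ref{lemm:derivatives} identifies the maximizer of \eqref{eq:regression:dual} with the solution of the linear system $(\eye{n}+\gamma K)\alpha^\star = Y$, which is precisely the stationarity condition you derive. Either of your arguments alone would suffice; the direct one is self-contained and arguably preferable in this context.
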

The dual optimization problem \eqref{eq:regression:dual} is a convex \ac{qop}
 as well and, surprisingly, scales only with the number of samples $n$ and is insensitive to the input dimension $p$. This last surprising observation is what gives the dual perspective its historical dominance over its primal counterpart in the context of kernelized regression discussed in \citep{scholkopf2002learning}. When working with high dimensional data for which the number of inputs $p$ is vastly bigger than the number of samples $n$, the dual optimization problem \eqref{eq:regression:dual} is smaller and often easier to solve.

For any $i$ and $j$, the kernel matrix entry $K(i, j)$ corresponds to the inner product between input samples $x_i$ and $x_j$ in $\Re^p$. The matrix $K$ is usually referred to as the kernel matrix or Gram matrix and is always positive definite and symmetric. Since the kernel specifies the inner products between all pairs of sample points in $X$, it completely determines the relative positions of those points in the embedding space.

Our \ac{cio} formulation \eqref{eq:opt:miop:kernel} of the sparse optimization problem \eqref{eq:l0-regression:primal} can be seen to take a dual perspective on the sparse regression problem \eqref{eq:l0-regression:primal}. That is, our novel optimization formulation \eqref{eq:opt:miop:kernel} is recognized as a subset selection problem in the space of kernels instead of regressors. It can indeed be remarked that when the sparsity constraint is omitted the kernel matrix reduces to the standard kernel matrix $$K = \textstyle\sum_{j\in[p]} X_j X_j\tpose = X X\tpose.$$

\subsection{A second-order cone relaxation}

Many heuristics approach the sparse regression problem \eqref{eq:l0-regression:primal} through a continuous relaxation. Indeed, a continuous relaxation of the \bigM formulation \eqref{eq:bigm:primal} of the sparse regression problem is immediately recognized as the convex  \ac{qop}
\begin{equation}
\label{eq:bigm:relaxation}
	\begin{array}{rl}
	\displaystyle \min_w 	& \frac{1}{2\gamma} \norm{w}_2^2 + \frac{1}{2} \norm{Y - X w}_2^2 \\[0.5em]
	\st    	& \norm{w}_\infty \leq \mc M, ~\norm{w}_1 \leq \mc{M} k
	\end{array} 
\end{equation}
which \citet{bertsimas2014statistics} recognized as a slightly stronger relaxation than the \texttt{Elastic Net} \eqref{eq:l1-regression:primal}. It thus makes sense to look at the continuous relaxation of the sparse kernel optimization problem \eqref{eq:opt:miop:kernel} as well. Note that both the \bigM \eqref{eq:bigm:relaxation} and \texttt{Elastic Net} \eqref{eq:l1-regression:primal} relaxation provide lower bounds to the exact sparse regression problem \eqref{eq:l0-regression:primal} in terms of a \ac{qop}. However, neither of the these relaxations is  very tight. In Theorem \ref{thm:cio:relaxation} we will indicate that a more intuitive and comprehensive lower bound based on our \ac{cio} formulation \eqref{eq:opt:miop:kernel} can be stated as a \ac{socp}.

A naive attempt to state a continuous relaxation of the \ac{cio} formulation \eqref{eq:opt:miop:kernel} in which we would replace the binary set $\S_k^p$ with its convex hull would result in a large but convex \ac{sdp} problem. Indeed, the convex hull of the  set $\S^p_k$ is the convex polytope $\{s\in[0,1]^p:\one\tpose s \leq k\}$. It is, however,
 folklore that large \ac{sdp}s are notoriously difficult to solve in practice. 
 For this reason, we reformulate here the continuous relaxation of \eqref{eq:opt:miop:kernel} as  a small \ac{socp} for which very efficient solvers do exist. This continuous relaxation provides furthermore some additional insight  towards the binary  formulation of the sparse regression problem \eqref{eq:l0-regression:primal}.

Using Theorem \ref{thm:vapnik}, we can equate the continuous relaxation of problem \eqref{eq:opt:miop:kernel} to the following saddle point problem
\begin{equation}
\label{eq:saddle_point}
	\min_{ s \in \conv( \S^p_k)} \, \max_{\alpha \in \Re^n}  \, -\frac{\gamma}{2} \textstyle \sum_{j\in[p]} s_j \cdot \left[ \alpha\tpose  K_j \alpha \right]  - \frac{1}{2} \alpha\tpose \alpha + Y\tpose \alpha .
\end{equation}
Note that the saddle point function is linear in $s$ for any fixed $\alpha$ and concave continuous in $\alpha$ for any fixed $s$ in the compact set $\conv(\S^p_k)$. 
It then follows (see \citet{sion1958general}) that we can exchange the minimum and maximum operators.
 By doing so, the continuous relaxation of our \ac{cio} problem satisfies
\begin{equation}
\label{eq:continuous_relaxation}
\begin{split}
	 \min_{s\in\conv(\S^p_k)} \, c&(\textstyle\sum_{j\in[p]} s_j K_j) = \\
	& \max_{\alpha \in \Re^n} - \frac{1}{2} \alpha\tpose \alpha + Y\tpose \alpha - \frac{\gamma}{2} \max_{s \in \conv(\S^p_k)}  \,\textstyle \sum_{j\in[p]} s_j \cdot \alpha\tpose K_j \alpha.
\end{split}
\end{equation}
The inner  maximization problem admits an explicit representation as the sum of the $k$-largest components in the vector with components $\alpha\tpose K_j \,\alpha$ ranging over $j$ in $[p]$. It is thus worth noting that this continuous relaxation has a discrete element to it. The continuous relaxation of the \ac{mio} problem \eqref{eq:opt:miop:kernel} can furthermore be written down as a tractable \ac{socp}.

\begin{theorem}
\label{thm:cio:relaxation}
The continuous relaxation of the sparse kernel regression problem \eqref{eq:opt:miop:kernel} can be reduced to the following \ac{socp} 
\begin{equation}
\label{eq:opt:miop:dual}
\begin{array}{rl}
	\displaystyle\min_{s\in \conv (\S^p_k)}\, c(\textstyle\sum_{j\in[p]} s_j K_j) = \max	& \displaystyle-\frac{1}{2} \ip{\alpha}{\alpha} + \ip{Y}{\alpha} - \ip{\one}{u} - k t \\[0.6em]
	\st    	& \alpha \in \Re^n, ~ t\in\Re, ~ u \in \Re_+^p, \\[0.5em]
		& \displaystyle \frac2\gamma u_j \geq \alpha\tpose K_j \alpha - \frac{2}{\gamma} t, \quad \forall j\in[p].
\end{array}
\end{equation}
\end{theorem}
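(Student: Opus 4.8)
The plan is to start from the saddle-point reformulation already established in \eqref{eq:continuous_relaxation}, where the minimax exchange has pushed the maximization over $\alpha$ to the outside, leaving only the inner maximization $\max_{s\in\conv(\S^p_k)} \textstyle\sum_{j\in[p]} s_j\,\alpha\tpose K_j\alpha$ to be evaluated in closed form and folded back in. Writing $\beta_j \defn \alpha\tpose K_j\alpha$ and noting $\beta_j\geq 0$ since $K_j = X_j X_j\tpose\in\S^n_+$, this inner problem is a linear program over the polytope $\conv(\S^p_k)=\{s\in[0,1]^p:\one\tpose s\leq k\}$ whose optimal value is the sum of the $k$ largest entries of $(\beta_1,\dots,\beta_p)$, attained at the extreme point selecting those $k$ coordinates.

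First I would dualize this inner linear program. Associating a scalar multiplier $t$ with the budget constraint $\one\tpose s\leq k$ and a vector $u\in\Re^p_+$ with the box constraints $s_j\leq 1$, linear programming strong duality (which holds since the primal is feasible and bounded over a compact polytope) yields
\begin{equation*}
\max_{s\in\conv(\S^p_k)} \textstyle\sum_{j\in[p]} s_j\,\beta_j \;=\; \min_{t,\,u\geq 0}\ \Big\{\, k t + \one\tpose u \ : \ t + u_j \geq \beta_j \ \ \forall j\in[p]\,\Big\}.
\end{equation*}
Because each $\beta_j\geq 0$, the optimal multiplier $t$ equals the $k$-th largest $\beta_j$ and is automatically nonnegative, so the sign restriction on $t$ may be dropped and $t$ left free, matching the statement.

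Next I would substitute this dual expression back into \eqref{eq:continuous_relaxation}. Since the inner term enters with coefficient $-\gamma/2<0$, the identity $-\tfrac{\gamma}{2}\min(\cdot)=\max\!\big(-\tfrac{\gamma}{2}(\cdot)\big)$ turns the minimization into a maximization, and a maximum of a maximum collapses into a single joint maximization over $(\alpha,t,u)$:
\begin{equation*}
\max_{\alpha,\,t,\,u\geq 0}\ -\tfrac12\ip{\alpha}{\alpha} + \ip{Y}{\alpha} - \tfrac{\gamma}{2}\big(kt + \one\tpose u\big)\quad \st\quad t + u_j \geq \alpha\tpose K_j\alpha\ \ \forall j\in[p].
\end{equation*}
The bijective change of variables $t\mapsto \tfrac{2}{\gamma}t$ and $u\mapsto\tfrac{2}{\gamma}u$ then absorbs the factor $\gamma/2$ into the objective, producing the linear term $-\one\tpose u - kt$ together with the constraint $\tfrac{2}{\gamma}u_j \geq \alpha\tpose K_j\alpha - \tfrac{2}{\gamma}t$, exactly as in \eqref{eq:opt:miop:dual}.

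It remains to argue that the resulting program is a genuine \ac{socp}. Since $K_j=X_j X_j\tpose$ is rank one, each quadratic $\alpha\tpose K_j\alpha = (\ip{X_j}{\alpha})^2$ is a perfect square, so feasibility forces $\tfrac{2}{\gamma}(u_j+t)\geq 0$ and each constraint is a convex rotated second-order cone constraint; the concave term $-\tfrac12\ip{\alpha}{\alpha}$ in the objective is handled with one additional epigraph cone. The conceptually delicate step is the second one — correctly dualizing the ``sum of the $k$ largest'' polyhedral function and confirming both that strong duality holds and that freeing the sign of $t$ is harmless given $\beta_j\geq 0$ — whereas merging the two maximizations, rescaling, and recognizing the conic representability are routine bookkeeping that need care only with the multiplicative constants.
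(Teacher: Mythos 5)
Your proposal is correct and follows essentially the same route as the paper: starting from the saddle-point form \eqref{eq:continuous_relaxation}, evaluating the inner maximization as the sum of the $k$ largest values $\alpha\tpose K_j\alpha$ via linear programming duality, and substituting the dual back into the outer maximization (the paper leaves the rescaling $t\mapsto\tfrac{2}{\gamma}t$, $u\mapsto\tfrac{2}{\gamma}u$ implicit, as it folds the constants into the dual characterization directly). The only cosmetic difference is that the paper dualizes the equality-constrained characterization $\one\tpose s=k$ of the sum-of-$k$-largest function, which makes $t$ free from the outset, whereas you dualize the knapsack inequality $\one\tpose s\leq k$ and then correctly argue that the sign restriction on $t$ can be dropped because $\alpha\tpose K_j\alpha=(\ip{X_j}{\alpha})^2\geq 0$.
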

\begin{proof}
The continuous relaxation of the optimization problem \eqref{eq:opt:miop:kernel} was already identified as the optimization problem \eqref{eq:continuous_relaxation}. We momentarily focus on the inner maximization problem in \eqref{eq:continuous_relaxation} and show it admits a closed form expression. As the only constraint on the (continuous) selection vector $s$ is a knapsack constraint, the inner maximum is nothing but the sum of the $k$-largest terms in the objective. Hence, we have
\[
	\max_{s \in \conv(\S^p_k)} \, \textstyle \sum_{j\in[p]} s_j \cdot \alpha\tpose K_j \alpha = \max_{[k]}([\alpha\tpose K_1 \alpha, \dots, \alpha\tpose K_p \alpha]),
\]
where $\max_{[k]}$ is defined as the convex function mapping its argument to the sum of its $k$-largest components. Using standard linear optimization  duality we have
\[
\begin{array}{rlcrl}
	\max_{[k]}(x) = \max	& x\tpose s 		& =  	& \min	& k t + \one\tpose u \\[0.5em]
	\st     			& s \in \Re^p_+		&	& \st		& t \in \Re, ~ u \in \Re^p_+ \\[0.5em]
					& s \leq \one, ~\one\tpose s=k		& 	& 		& u_j \geq x_j -t, \quad \forall j \in [p].
\end{array}
\]
where $t$ and $u$ are the dual variables corresponding to the constraints in the maximization characterization of the function $\max_{[k]}$. Making use of the dual characterization of $\max_{[k]}$ in expression \eqref{eq:continuous_relaxation} gives us the desired  result.
\end{proof}

The continuous relaxation \eqref{eq:opt:miop:dual} of the sparse regression problem \eqref{eq:l0-regression:primal} discussed in this section is thus recognized as selecting the $k$-largest terms $\alpha\tpose K_j \alpha$ to construct the optimal dual lower bound. We shall find that the dual offers an excellent warm start when attempting to solve the sparse linear regression problem exactly.

\section{A cutting plane algorithm}
\label{sec:cutting_plane}

We have formulated  the sparse regression problem \eqref{eq:l0-regression:primal} as a pure  binary  convex optimization problem in Theorem \ref{thm:cio}. Unfortunately, no commercial solvers are available which are targeted to solve \ac{cio} problems of the type \eqref{eq:opt:miop:kernel}. In this section, we discuss a tailored solver largely based on the algorithm described by \cite{duran1986outer}. The algorithm is a cutting plane approach which iteratively solves increasingly better \ac{mio} approximations to the \ac{cio} formulation \eqref{eq:opt:miop:kernel}. Furthermore, the cutting plane algorithm avoids constructing the \ac{cio} formulation \eqref{eq:opt:miop:kernel} explicitly which can prove burdensome when working with high-dimensional data. We provide numerical evidence in  Section \ref{sec:empirical_results} that the algorithm described here is  indeed  extremely efficient.

\subsection{Outer approximation algorithm}
\label{sec:outer-appr-algor}

In order to solve the \ac{cio} problem \eqref{eq:opt:miop:kernel}, we follow the outer approximation approach introduced by \citet{duran1986outer}. The algorithm described by \citet{duran1986outer} proceeds to find a solution to the \ac{cio} problem \eqref{eq:opt:miop:kernel} by constructing a sequence of \ac{mio} approximations based on cutting planes. In pseudocode, it can be seen to construct a piece-wise affine lower bound to the convex regression loss function $c$ defined in equation \eqref{eq:char3}.

\begin{algorithm}
\SetKwInOut{Input}{input}
\SetKwInOut{Output}{output}
\caption{The outer approximation process}
\label{alg:outer_approximation}
 \Input{$Y \in \Re^{n}$, $X \in \Re^{n\times p}$ and $k \in [1, p]$}
 \Output{$s^\star \in \S^p_k$ and $w^\star \in \Re^p$}
 $s_1 \gets$ warm start \\ $\eta_1 \gets 0$ \\ $t \gets 1$ \\
 \While{$\eta_t < c(s_t)$}{
   $ s_{t+1}, ~\eta_{t+1} \gets \arg \min_{s, \, \eta} \, \{ \, \eta \in \Re_+ ~\mathrm{s.t.} ~s \in \S^p_k, ~~\eta \geq c(s_i) + \nabla c(s_i) (s-s_i), ~ \forall i \in [t] \}$ \\
   $t \gets t+1$
  }
 $s^\star \gets s_{t}$ \\
 $w^\star \gets 0$, \quad $w^\star_{s^\star} \gets \left(\eye{p}/\gamma + X_{s^\star}\tpose X_{s^\star} \right)^{-1} X_{s^\star}\tpose Y$
\end{algorithm}

At each iteration, the cutting plane added $\eta \geq c(s_t) + \nabla c (s_t) (s-s_t)$ cuts off the current binary solution $s_t$ unless it happened to be optimal in \eqref{eq:opt:miop:kernel}.  As the algorithm progresses, the outer approximation function $c_t$ thus constructed $$c_t(s) \defn \max_{i\in[t]} \, c(s_t) + \nabla c(s_t) (s-s_t)$$ becomes an increasingly better approximation to the regression loss function $c$ of interest. Unless the current binary solution $s_t$ is optimal, a new cutting plane will refine the feasible region of the problem by cutting off the current feasible binary  solution.

\begin{theorem}[Cutting Plane Method]
  The procedure described in Algorithm \ref{alg:outer_approximation} terminates after a finite number of cutting planes and returns the exact sparse regression solution $w_0^\star$ of \eqref{eq:l0-regression:primal}.
\end{theorem}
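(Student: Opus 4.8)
The plan is to read Algorithm~\ref{alg:outer_approximation} as the outer approximation scheme of \citet{duran1986outer} specialized to minimizing the convex loss $c$ over the \emph{finite} feasible set $\S^p_k$, and to establish the two assertions separately: finite termination and correctness. The enabling observation, which makes every generated cut legitimate, is that the function $s\mapsto c(\textstyle\sum_{j\in[p]} s_j K_j)$ is not only convex (Lemma~\ref{lemm:convexity}) but also continuously differentiable on a neighborhood of $[0,1]^p$: since $\eye{n}\succ 0$ and each $K_j\succeq 0$, the matrix $\eye{n}+\gamma\sum_{j} s_j K_j$ remains positive definite, so the closed form \eqref{eq:char2} is a smooth function of $s$ whose gradient $\nabla c(s)$ exists and is computable by the chain rule. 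Convexity then yields the supporting-hyperplane inequality $c(s)\geq c(s_i)+\nabla c(s_i)(s-s_i)$ for all $s$, so each cut is a global underestimator of $c$ that is \emph{tight} at its generating point $s_i$. I assume throughout, as is standard for outer approximation, that the inner \ac{mio} subproblem is solved to global optimality.

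First I would prove finite termination by a no-revisit argument exploiting the discreteness of $\S^p_k$. Suppose the subproblem at some iteration returns a point $s_{t+1}$ equal to a previously generated iterate $s_j$ with $j\le t$. The cut added at $s_j$ is present in the subproblem and, evaluated at $s=s_j$, forces $\eta_{t+1}\geq c(s_j)+\nabla c(s_j)(s_{t+1}-s_j)=c(s_j)=c(s_{t+1})$. Hence the loop guard $\eta_{t+1}<c(s_{t+1})$ fails and the procedure stops. Consequently, as long as the algorithm does not terminate it must produce a \emph{new} element of $\S^p_k$ at each iteration; since $\lvert\S^p_k\rvert\le\sum_{j=0}^{k}\binom{p}{j}$ is finite, only finitely many cuts can be generated before the guard is violated.

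Next I would establish correctness by a sandwich argument. Write $c^\star\defn\min_{s\in\S^p_k}c(s)$ for the optimal value of the \ac{cio} problem \eqref{eq:opt:miop:kernel}, and let $c_t(s)\defn\max_{i\in[t]} c(s_i)+\nabla c(s_i)(s-s_i)$ denote the piecewise-affine outer approximation, so that the subproblem returns $\eta_t=\min_{s\in\S^p_k} c_{t-1}(s)$. Because each affine piece underestimates $c$, we have $c_{t-1}\le c$ pointwise and therefore $\eta_t=\min_{s\in\S^p_k}c_{t-1}(s)\le\min_{s\in\S^p_k}c(s)=c^\star$. At the iteration where the loop exits, the negated guard reads $\eta_t\geq c(s_t)$, while feasibility of $s_t$ gives $c(s_t)\geq c^\star$. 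Chaining these, $c(s_t)\le\eta_t\le c^\star\le c(s_t)$, so every inequality is an equality and $s^\star=s_t$ attains $c(s^\star)=c^\star$; thus $s^\star$ solves \eqref{eq:opt:miop:kernel}. The degenerate case $t=1$, in which the loop never runs, forces $c(s_1)\le\eta_1=0$; since $c\geq 0$ as the minimum of a nonnegative objective, $s_1$ is already optimal.

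Finally I would translate the optimal support back into a regressor. By Theorem~\ref{thm:cio} the value $c^\star$ equals the optimum of \eqref{eq:l0-regression:primal}, attained at the support $s^\star$ through the bilevel split \eqref{eq:reformulation}. Fixing that support reduces the inner problem to unconstrained ridge regression on the columns $X_{s^\star}$, whose normal equations---exactly as in the proof of Lemma~\ref{lemm:convexity}---are solved by $w^\star_{s^\star}=(\eye{p}/\gamma+X_{s^\star}\tpose X_{s^\star})^{-1}X_{s^\star}\tpose Y$ with the remaining entries zero, which is precisely the vector returned by the algorithm; hence $w^\star=w_0^\star$. The main obstacle is not any single estimate but making the two structural facts airtight: that each gradient cut is simultaneously a valid global lower bound (requiring the smoothness and positive-definiteness noted above) and tight at its generator, since it is exactly this tightness, together with the finiteness of $\S^p_k$, that powers the no-revisit termination---an ingredient absent from outer approximation over continuous domains.
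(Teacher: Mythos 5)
Your proof is correct, but note that the paper itself contains no written proof of this theorem: the statement is presented as a corollary of the general finite-convergence theory for outer approximation, with the text immediately following it deferring to ``a result found in'' \citep{fletcher1994solving}. What you have done is reconstruct, in self-contained form, the specialization of that general argument to this problem, and your two ingredients are exactly the right ones: (i) validity and tightness of the gradient cuts, which follows from convexity of $s \mapsto c(\sum_{j\in[p]} s_j K_j)$ (Lemma \ref{lemm:convexity}) together with differentiability, guaranteed because $\eye{n}+\gamma\sum_{j\in[p]} s_j K_j$ stays positive definite on a neighborhood of $[0,1]^p$; and (ii) the no-revisit argument, which is precisely how finite termination of outer approximation is established in the cited literature, and which becomes especially clean here because the feasible set $\S^p_k$ is finite, so no continuous-relaxation machinery is needed. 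The exit-time sandwich $c(s_t)\le\eta_t\le c^\star\le c(s_t)$ plus the translation of the optimal support into the ridge regressor via Theorem \ref{thm:cio} then gives correctness. One cosmetic imprecision: since the subproblem constrains $\eta\in\Re_+$, its optimal value is $\max\left(0,\min_{s\in\S^p_k}c_{t-1}(s)\right)$ rather than $\min_{s\in\S^p_k}c_{t-1}(s)$ as you wrote; because $c\ge 0$ forces $c^\star\ge 0$, the key inequality $\eta_t\le c^\star$ survives and nothing breaks --- indeed you implicitly use this nonnegativity in your degenerate case $t=1$. In short, your argument is the proof the paper's citation points to, made explicit: it buys the reader self-containedness and makes transparent that the finiteness of $\S^p_k$ is what powers termination, at the cost of a paragraph the authors chose to outsource.
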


Despite the previous encouraging corollary of a result found in \citep{fletcher1994solving}, it nevertheless remains the case that from a theoretical point of view exponentially many cutting planes need to be computed in the worst-case, potentially rendering our approach impractical. Furthermore, at each iteration a \ac{mio} problem needs to be solved. This can be done by constructing a branch-and-bound tree, c.f. \citet{lawler1966branch}, which itself requires a potential exponential number of leaves to be explored. This complexity behavior is however to be expected as exact sparse regression is known to be an $NP$-hard problem. Surprisingly, the empirical timing results presented in Section \ref{sec:empirical_results} suggests that the situation is much more interesting than what complexity theory might suggest. In what remains of this section, we briefly discuss three techniques to carry out the outer approximation algorithm more efficiently than a naive implementation would.

In general, outer approximation methods are known as ``multi-tree'' methods because every time a cutting plane is added, a slightly different \ac{mio} problem is to be solved anew by constructing a branch-and-bound tree. Consecutive \ac{mio}s in Algorithm \ref{alg:outer_approximation} differ only in one additional cutting plane. Over the course of our iterative cutting plane algorithm, a naive implementation would require that multiple branch and bound trees are built in order to solve the successive \ac{mio} problems. We implement a ``single tree'' way of solving the iteration algorithm \ref{alg:outer_approximation} by using dynamic constraint generation, known in the optimization literature as either a lazy constraint or column generation method. Lazy constraint formulations described in \citep{barnhart1998branch} dynamically add cutting planes to the model whenever a binary feasible solution is found. This saves the rework of rebuilding a new branch-and-bound tree every time a new binary  solution is found in Algorithm \ref{alg:outer_approximation}. Lazy constraint callbacks are a relatively new type of callback. To date, the only commercial solvers which provide lazy constraint callback functionality are \texttt{CPLEX}, \texttt{Gurobi} and \texttt{GLPK}.

In what follows, we discuss two additional tailored adjustments to the general outer approximation method which render the overall method more efficient. The first concerns an efficient way to evaluate both the regression loss function $c$ and its subgradient $\nabla c$ efficiently. The second discusses a heuristic to compute a warm start $s_1$ to ensure that the first cutting plane added is of high quality, causing the outer approximation algorithm to converge more quickly.

\subsection{Efficient dynamic constraint generation}

In the outer approximation method considered in this document to solve the \ac{cio} problem \eqref{eq:opt:miop:kernel} linear constraints of the type 
\begin{equation}
\label{eq:linearization}
	\eta \geq c(\bar{s}) + \nabla c(\bar{s}) (s - \bar{s})
\end{equation}
at $\bar s$ a given iterate, are considered as cutting planes at every iteration. As such constraints need to be added dynamically, it is essential that we can evaluate both the regression loss function $c$ and its subgradient components efficiently. 

\begin{lemma}[Derivatives of the optimal regression loss $c$]
\label{lemm:derivatives}
Suppose the kernel matrix $K$ is differentiable function of the parameter $s$. Then, we have that the gradient of the regression loss function $c(K) = \frac12 \alpha^\star(K) Y$ can be stated as
\[
	\nabla c(s) =  -\alpha^\star(K) \tpose \cdot \frac{\gamma}{2} \frac{\d K}{\d s} \cdot \alpha^\star(K),
\]
where $\alpha^\star(K)$ maximizes \eqref{eq:regression:dual} and hence is the solution to the linear system $$\alpha^\star(K) = \left(\eye{n} + \gamma K \right)^{-1} Y.$$
\end{lemma}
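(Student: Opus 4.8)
The plan is to build on the dual formulation of Theorem~\ref{thm:vapnik}, which expresses the regression loss as the unconstrained maximization
$$c(K) = \max_{\alpha \in \Re^n} f(\alpha, K), \qquad f(\alpha, K) \defn -\frac{\gamma}{2}\ip{\alpha}{K\alpha} - \frac12 \ip{\alpha}{\alpha} + \ip{Y}{\alpha}.$$
Since $\eye{n} + \gamma K$ is strictly positive definite, the inner objective $f(\cdot, K)$ is strictly concave in $\alpha$, so its maximizer is unique and characterized by the stationarity condition $\nabla_\alpha f = -\gamma K \alpha - \alpha + Y = 0$. Solving this linear system gives $\alpha^\star(K) = (\eye{n} + \gamma K)^{-1} Y$, which is the stated characterization of the optimal dual variable.

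Next I would verify the closed form $c(K) = \tfrac12 \ip{\alpha^\star}{Y}$ by substituting $\alpha^\star$ back into $f$. The clean route is to use the stationarity identity $\gamma K \alpha^\star = Y - \alpha^\star$, which collapses the quadratic kernel term into $\gamma\ip{\alpha^\star}{K\alpha^\star} = \ip{\alpha^\star}{Y} - \ip{\alpha^\star}{\alpha^\star}$. Plugging this into $f(\alpha^\star, K)$ cancels the two quadratic-in-$\alpha^\star$ contributions and leaves $c = -\tfrac12\ip{\alpha^\star}{Y} + \ip{Y}{\alpha^\star} = \tfrac12\ip{\alpha^\star}{Y}$. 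This is a routine algebraic simplification that I would not belabor.

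The heart of the argument is the gradient formula, for which I would invoke the envelope theorem (Danskin's theorem). Writing $c(s) = f(\alpha^\star(s), K(s))$ and differentiating through the chain rule gives
$$\nabla c(s) = \left.\nabla_\alpha f\right|_{\alpha^\star}\!\frac{\d \alpha^\star}{\d s} + \left.\frac{\partial f}{\partial s}\right|_{\alpha^\star},$$
where the second term collects only the dependence of $f$ on $s$ that enters explicitly through $K$. The first term vanishes identically because $\alpha^\star$ is a stationary point of the concave inner problem, i.e. $\nabla_\alpha f(\alpha^\star, K) = 0$. Hence only the explicit term survives, and since $-\tfrac{\gamma}{2}\ip{\alpha^\star}{K\alpha^\star}$ is the sole $K$-dependent piece of $f$, differentiating it through the map $s \mapsto K$ yields exactly $\nabla c(s) = -\tfrac{\gamma}{2}\,\alpha^\star(K)\tpose\,\tfrac{\d K}{\d s}\,\alpha^\star(K)$.

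The main obstacle is the rigorous justification of the envelope step rather than the calculus itself. I would need to confirm that the uniqueness of $\alpha^\star$ (from strict concavity) together with the joint differentiability of $(\alpha, s)\mapsto f$ and the assumed differentiability of $s\mapsto K$ place us in the regime where the optimal value inherits differentiability and the envelope identity is valid. Concretely, applying the implicit function theorem to the stationarity condition $(\eye{n}+\gamma K(s))\alpha^\star(s) = Y$ furnishes the smoothness of $s\mapsto \alpha^\star(s)$ needed to legitimize the chain-rule manipulation, after which the vanishing of the $\nabla_\alpha f$ term does the real work.
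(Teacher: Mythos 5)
Your proof is correct and follows exactly the route the paper intends: the paper states Lemma \ref{lemm:derivatives} without an explicit proof, but its phrasing---defining $\alpha^\star(K)$ as the maximizer of the dual problem \eqref{eq:regression:dual} from Theorem \ref{thm:vapnik}---points precisely to your Danskin/envelope argument, in which stationarity of the inner concave maximization kills the $\nabla_\alpha f$ term and only the explicit $K$-dependence of $-\tfrac{\gamma}{2}\ip{\alpha}{K\alpha}$ survives. Your stationarity computation yielding $\alpha^\star(K) = \left(\eye{n}+\gamma K\right)^{-1}Y$, the back-substitution giving $c = \tfrac12 \ip{\alpha^\star}{Y}$, and the implicit-function-theorem justification of the smoothness of $s \mapsto \alpha^\star(s)$ are all sound.
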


We note  that the naive numerical evaluation of the convex loss function $c$ or any of its subgradients would require the inversion of the regularized kernel matrix $\eye{n}+\gamma \sum_{j\in[p]} \bar s_j K_j$. The regularized kernel matrix is dense in general and always of full rank. Unfortunately, matrix inversion of general matrices presents work in the order of $\mc O(n^3)$ floating point operations and quickly becomes excessive for sample sizes $n$ in the order of a few 1,000s. Bear in mind that such an inversion needs to take place for each cutting plane added in the outer approximation Algorithm \ref{alg:outer_approximation}. 

It  would thus appear that computation of the regression loss $c$ based on its explicit characterization \eqref{eq:char2} is very demanding. Fortunately, the first explicit characterization \eqref{eq:char1} can be used to bring down the work necessary to $\mc O(k^3+n k)$ floating point operations as we will show now. Comparing equalities \eqref{eq:char1} and \eqref{eq:char2} results immediately in the identity
\begin{equation}
\label{eq:woodbury}
	\alpha^\star(\textstyle\sum_{j\in[p]} s_j K_j) = \left( \eye{n} - X_s (\eye{k}/\gamma+ X_s\tpose X_s)^{-1} X_s \right) Y.
\end{equation}
The same result can also be obtained by applying the matrix inversion lemma stated in \citep{hager1989updating} to the regularized kernel matrix by noting that the micro kernels $K_j$ are rank one dyadic products. The main advantage of the previous formula is the fact that it merely requires the inverse of the much smaller capacitance matrix $C \defn \eye{k}/\gamma+X_s\tpose X_s$ in $\S_{++}^{k}$ instead of the dense full rank regularized kernel matrix in $\S_{++}^{n}$. 

Using expression \eqref{eq:woodbury}, both the regression loss function $c$ and any of its subgradients can be evaluated using $\mc O(k^3+n k)$ instead of $\mc O(n^3)$ floating point operations. When the number of samples $n$ is significantly larger than $k$, the matrix inversion lemma provides a significant edge over a vanilla matrix inversion. We note  that from a statistical perspective this always must be the case if there is any hope that sparse regression might yield statistically meaningful results.

Pseudocode implementing the ideas discussed in this section is provided in Algorithm \ref{alg:regression_function}.

\begin{algorithm}
\SetKwInOut{Input}{input}
\SetKwInOut{Output}{output}
\caption{Regression function and subgradients}
\label{alg:regression_function}
 \Input{$Y \in \Re^{n}$, $X \in \Re^{n\times p}$, $s \in \S^p_k$ and $\gamma \in \Re_{++}$}
 \Output{$c \in \Re_+$ and $\nabla c \in \Re^p$}
 $\alpha^\star \gets Y - X_s (\eye{k}/\gamma+ X_s\tpose X_s)^{-1} X_s\tpose Y$ \\
 $c \gets \frac12 Y\tpose \alpha^\star$ \\
 \For {$j$ in $[p]$ } {
 	$\nabla c_j \gets -\frac{\gamma}{2} (X_j\tpose \alpha^\star)^2$ \\
 }
\end{algorithm}

\subsection{Dual warm starts}

Regardless of the initial selection $s_1$, the outer approximation Algorithm \ref{alg:outer_approximation} will eventually return the optimal subset solution $s^\star$ to the sparse regression formulation in Theorem \ref{thm:cio}. Nevertheless, to improve computational speed in practice it is often desirable to start with a high-quality warm start rather than any arbitrary feasible point in $\S^p_k$. 

As already briefly hinted upon, a high-quality warm start can be obtained by solving the continuous relaxation \eqref{eq:opt:miop:dual}. More specifically, we take as warm start $s_1$ to the outer approximation algorithm the solution to 
\begin{equation}
	s_1 \in \arg \max_{s \in \S_k^p} ~\textstyle\sum_{j\in[p]} s_j \cdot \alpha^{\star\top} K_j \alpha^\star,
	\label{eq:tr1}
\end{equation}
where $\alpha^\star$ is optimal in \eqref{eq:opt:miop:dual}.  Note that the solution to problem \eqref{eq:tr1}   can be found explicitly as the vector indicating the $k$ largest components of $(\alpha^{\star\top} K_1 \alpha^\star, \dots, \alpha^{\star\top} K_p \alpha^\star)$. We finally remark that the \texttt{Lasso} or  the solution found by the first order heuristic developed in \citep{bertsimas2014statistics} could have been used equally well.

\section{Scalability and phase transitions}
\label{sec:empirical_results}

To evaluate the effectiveness of the  cutting plane algorithm developed in Section \ref{sec:cutting_plane}, we report its ability to recover the correct regressors as well as its running time. In this section, we present empirical evidence  on  two critically important observations. The first observation is that our cutting plane algorithm scales to provable optimality in seconds for large regression problems with $n$ and $p$ in the 100,000s. That is two orders of magnitude larger than the  known exact sparse regressor methods in  \citep{bertsimas2014statistics} and takes away the main propelling justification for heuristic approaches for many regression instances in practice. The second observation relates to the fact that we observe phase transition phenomena in the three important properties which characterize our exact sparse regression formulation : its ability to find all relevant features ($A\%$), its rejection of irrelevant features from the obfuscating bulk ($F\%$), and the time ($T$) it takes to find an exact sparse regressor using our cutting plane Algorithm \ref{alg:outer_approximation}. 

All algorithms in this document are implemented in \texttt{Julia} and executed on a standard \texttt{Intel(R) Xeon(R) CPU E5-2690 @ 2.90GHz} running \texttt{CentOS release 6.7}. All optimization was done with the help of the commercial mathematical optimization distribution \texttt{Gurobi version 6.5}.

\subsection{Data description}
\label{ssec:data_description}

Before we present the empirical results, we first describe the properties of the synthetic data which shall be used throughout this section. The input and response data are generated synthetically with the observations $Y$ and input data $X$ satisfying the linear relationship
\[
	Y = X w_{\mathrm{true}} + E.
\]
The unobserved true regressor $w_{\mathrm{true}}$ has exactly $k$-nonzero components at indices selected uniformly without replacement from $[f]$. Likewise, the nonzero coefficients in $w_{\mathrm{true}}$ are drawn uniformly at random from the set $\{-1, +1\}$. The observation $Y$ consists of the signal $S \defn X w_{\mathrm{true}}$ corrupted by the noise vector $E$. The noise components $E_i$ for $i$ in $[n]$ are drawn \ac{iid} from a normal distribution $N(0, \sigma^2)$ and scaled to $$\sqrt{\mathrm{SNR}} = \norm{S}_2 / \norm{E}_2$$ Evidently as the \ac{snr} increases, recovery of the unobserved true regressor $w_{\mathrm{true}}$ from the noisy observations can be done with higher precision.

We have yet to specify how the input matrix $X$ is chosen. We assume here that the input data samples $X = (x_1, \dots, x_n)$ are drawn from an \ac{iid} source with Gaussian distribution; that is $$x_i \sim N(0, \Sigma), \quad \forall i\in [n].$$ The variance matrix $\Sigma$ will be parametrized by the correlation coefficient $\rho \in [0, 1)$ as $\Sigma(i, j) \defn \rho^{\abs{i-j}}$ for all $i$ and $j$ in $[p]$. As the $\rho$ tends to $1$, the columns of the data matrix $X$ become more alike which should impede the discovery of nonzero components of the true regressor $w_{\mathrm{true}}$ by obfuscating them with highly correlated look-a-likes. In the extreme case in which $\rho =1$, all columns of $X$ are the same at which point there is no hope of discovering the true regressor $w_{\mathrm{true}}$ even in the noiseless case.

\subsection{Scalability}

We provide strong evidence that the cutting plane Algorithm \ref{alg:outer_approximation} represents a truly scalable algorithm to the exact sparse regression problem \eqref{eq:l0-regression:primal} for $n$ and $p$ in the 100,000s. As many practical regression problems are within reach of our exact cutting plane Algorithm \ref{alg:outer_approximation}, the need for convex surrogate regressors such as \texttt{Elastic Net} and \texttt{Lasso} is greatly diminished. 

We note that an effective  regression must find all relevant features ($A\%=100$) while at the same time reject those that are irrelevant $(F\%=0)$. To separate both efforts, we assume in this and the following  section that   true number $k$ of nonzero components of the ground truth $w_\mathrm{true}$ is known. In this case $A\%+F\%=100$ which allows us to focus entirely on the the accuracy of the obtained regressors. Evidently, in most practical regression instances $k$ needs to be inferred from the data as well. Incorrect determination of this number can indeed lead to high false alarm rates. We will return to this important issue of variable selection and false alarm rates at the end of the subsequent section.

For the sake of comparison, we will also come to discuss the time it takes to solve the \texttt{Lasso} heuristic \eqref{eq:l1-regression:primal} as implemented by the \texttt{GLMNet} implementation of \citet{friedman2013glmnet}.  Contrary to exact sparse regression, no direct way exists to obtain a sparse regressor from solving the convex surrogate heuristic \eqref{eq:l1-regression:primal}. In order to facilitate a fair comparison however, we shall take that \texttt{Lasso} regressor along a path of optimal solutions in \eqref{eq:l1-regression:primal} for varying $\lambda$ which is the least regularized but has exactly $k$ nonzero coefficients as a heuristic sparse solution. 

\begin{table}
  \begin{center}

\begin{tabular}{| c | l | c c c | c c c |}
  \cline{3-8}
  \multicolumn{2}{c}{} 											&	\multicolumn{3}{|c|}{Exact $T$ [s]} 		     		& 	\multicolumn{3}{c|}{\texttt{Lasso} $T$ [s]} \\
  \cline{3-8}
  \multicolumn{2}{c|}{} 							 			 &	$n=10$k	&	$n=20$k	&	$n=100$k	&	$n=10$k	&	$n=20$k	&	$n=100$k	\\
  \cline{1-8}
  \multirow{3}{*}{\rotatebox{90}{$k=10~$}}			&	$p = 50$k 	& 	21.2 		& 	34.4 		& 	310.4	&	69.5 		& 	140.1	& 	431.3 \\
                                                                                                        &	$p = 100$k 	& 	33.4 		& 	66.0 		& 	528.7	& 	146.0	& 	322.7 	& 	884.5 \\
                                                                                                        &	$p = 200$k 	& 	61.5 		& 	114.9 	& 	NA		& 	 279.7 	& 	566.9 	& 	NA \\
  \cline{1-8}
  \multirow{3}{*}{\rotatebox{90}{$k=20~$}}			&	$p = 50$k 	& 	15.6 		&	38.3 		& 	311.7	&	107.1 	& 	142.2 	& 	467.5 \\
                                                                                                        &	$p = 100$k 	& 	29.2 		& 	62.7 		& 	525.0	& 	216.7 	& 	332.5 	& 	988.0 \\
                                                                                                        &	$p = 200$k 	& 	55.3 		& 	130.6 	& 	NA		& 	353.3 	& 	649.8 	& 	NA \\
  \cline{1-8}
  \multirow{3}{*}{\rotatebox{90}{$k=30~$}}			&	$p = 50$k 	& 	31.4	 	& 	52.0 		& 	306.4	&	99.4 		& 	220.2 	& 	475.5 \\
                                                                                                        &	$p = 100$k 	& 	49.7 		& 	101.0 	& 	491.2	& 	318.4 	& 	420.9 	& 	911.1 \\
                                                                                                        &	$p = 200$k 	& 	81.4 		& 	185.2 	& 	NA		&	480.3 	& 	884.0 	&	NA \\
  \cline{1-8}
\end{tabular}
  \end{center}
\vspace{0.5em}
\caption{A comparison between exact sparse regression using our cutting plane algorithm and the \texttt{Lasso} heuristic with respect to their solution time in seconds applied to noisy ($\sqrt{\mathrm{SNR}}=20$) and lightly correlated data ($\rho=0.1$) explained by either $k=10$, $k=20$ or $k=30$ relevant features. These problem instances are truly large scale as for the largest instance counting $n=100,000$ samples for $p=200,000$ regressors a memory exception was thrown when building the data matrices $Y$ and $X$. Remarkably, even on this scale the cutting plane algorithm can be significantly faster than the \texttt{Lasso} heuristic.
}
\label{table:scalability}
\end{table}

In Table \ref{table:scalability} we discuss the timing results for exact sparse linear regression as well as for the \texttt{Lasso} heuristic applied to noisy ($\sqrt{\mathrm{SNR}}=20$) and lightly correlated ($\rho=0.1$) synthetic data. We do not report the accuracy nor the false alarm rate of the obtained solution as this specific data is in the regime where exact discovery of the support occurs for both the \texttt{Lasso} heuristic and exact sparse regression. 

Remarkably, the timing results in Table \ref{table:scalability} suggest that using an exact method does not impede our ability to obtain the solution fast. The problem instances displayed are truly large scale as indeed for the largest problem instance a memory exception was thrown when building the data matrices $X$ and $Y$. In fact, even in this large scale setting our cutting plane algorithm can be significantly faster than the \texttt{Lasso} heuristic. Admittedly though, the \texttt{GLMNet} implementation returns an entire solution path for varying $\lambda$ instead of a single regression model. Comparing though to the performance reported on exact sparse regression approaches in \citep{furnival2000regressions} and \citep{bertsimas2014statistics}, our method presents a potentially game changing speed up of at least two orders of magnitude. The results in Table \ref{table:scalability} thus do refute the widely held belief that exact sparse regression is not feasible at large scales. In fact, we consider pointing out the fact that exact sparse regression is not hopeless in practice an 
important contribution of this paper.

Although a hard theoretical picture is not yet available as for why the cutting plane Algorithm \ref{alg:outer_approximation} proves so efficient, we hope that these encouraging results spur an interest in exact approaches towards sparse regression. In the subsequent section, we will come to see that the scalability of exact sparse regression entails more than meets the eye.

\subsection{Phase transition phenomena}
\label{ssec:phase_transition_phenomena}

We have established that the cutting plane Algorithm \ref{alg:outer_approximation} scales to provable optimality for problems with number of samples and regressor dimension in the 100,000s. Let us remark that for the results  presented in Table \ref{table:scalability}, both the exact and heuristic algorithms returned a sparse regressor with correct support and otherwise were of similar precision. In cases where the data does not allow a statistically meaningful recovery of the ground truth $w_{\mathrm{true}}$ an interesting phenomenon occurs. We present and discuss in this part of the paper three remarkable phase transition phenomena. The first will concern the statistical power of sparse regression, whereas the second will concern our ability to find the optimal sparse regressor efficiently. We will refer to the former transition as the accuracy transition, while referring to the latter as the complexity transition. The false alarm phase transition is the third phase transition phenomenon and relates to the ability of exact sparse regression to reject irrelevant features from the obfuscating bulk. We will argue here using strong empirical evidence that these transitions are in fact intimately related. Of all three phase transitions discussed here, only the accuracy phase transition has previously received attention and is also understood theoretically.

The accuracy phase transition describes the ability of the sparse regression formulation \eqref{eq:l0-regression:primal} to uncover the ground truth $w_{\mathrm{true}}$ from corrupted measurements alone. The corresponding phase transition for the \texttt{Lasso} has been extensively studied in the literature by amongst many others \citet{buhlmann2011statistics, hastie2015statistical} and \citet{wainwright2009sharp} and is considered well understood by now. As mentioned, with uncorrelated input data ($\rho=0$) a phase transition occurs at the curve \eqref{eq:wainwright}. In the regime $n > n_1$ exact recovery with \texttt{Lasso} occurs with high-probability for some $\lambda>0$, whereas otherwise the probability for successful recovery drops to zero.

A similar phase transition has been observed by \citet{zheng2015does} and \citet{wang2011performance} for exact sparse regression as well, although this transition is far less understood from a theoretical perspective than the similar transition for its heuristic counterpart. Recently though, \citet{gamarnik2017high} have made some way and shown that an all or nothing phase transition phenomena occurs for exact sparse regression with binary coefficients as well.

\begin{theorem}[\citet{gamarnik2017high}]
  \label{thm:gamarnik}
  Let the data ($\rho=0$) be generated as in Section \ref{ssec:data_description}. Let $\epsilon>0$. Suppose $k \log k \leq C n$, for some $C> 0$ for all $k$ and $n$. Suppose furthermore that $k\to\infty$ and $\sigma^2/k\to 0$. If $n\geq (1-\epsilon) n_0$, then with high probability
  \[
    \frac1k \norm{w_0^\star-w_{\mathrm{true}}}_0 \to 0.
  \]
  Whereas when $n \leq (1-\epsilon) n_0$, then with high probability
  \(
    \frac1k \norm{w_0^\star-w_{\mathrm{true}}}_0 \to 1.
  \)
\end{theorem}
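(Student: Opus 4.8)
Since $w_{\mathrm{true}}$ is binary, the relevant estimator $w_0^\star$ ranges over vectors in $\{-1,0,+1\}^p$ with exactly $k$ nonzeros; equivalently, by Theorem~\ref{thm:cio} it selects the support (together with signs) minimizing the regression loss, and in the phase-transition regime the Tikhonov term only perturbs this loss by lower order, so the whole question reduces to comparing the least-squares loss across the $\binom{p}{k}2^k$ candidates. Writing $Y = X w_{\mathrm{true}} + E$ and $u = w_{\mathrm{true}} - w$ for a competitor $w$, the loss gap is
\begin{equation*}
\norm{Y - Xw}_2^2 - \norm{Y - Xw_{\mathrm{true}}}_2^2 = \norm{Xu}_2^2 + 2\,(Xu)\tpose E,
\end{equation*}
and because $\rho=0$ gives $\Sigma=\eye{p}$, the vector $Xu \sim N(0, \norm{u}_2^2\,\eye{n})$ is independent of $E \sim N(0,\sigma^2\eye{n})$. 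Hence the event that a competitor beats the truth depends on $w$ only through $v\defn\norm{u}_2^2$, which equals $2(k-r)$ when $w$ matches $w_{\mathrm{true}}$ in exactly $r$ signed support positions. \textbf{First I would} compute the exponential rate of a single such event by a Chernoff bound: integrating the moment generating function of $\norm{Xu}_2^2 + 2(Xu)\tpose E$ over both Gaussians (optimizing the tilt) yields $P(\text{competitor at distance } v \text{ beats truth}) = \exp(-\tfrac{n}{2}\log(1 + v/(4\sigma^2))(1+o(1)))$.

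\textbf{The recovery direction} (above the threshold, giving $\tfrac1k\norm{w_0^\star-w_{\mathrm{true}}}_0\to0$) I would settle by a first-moment union bound. The number of competitors agreeing in $r$ positions is $\binom{k}{r}\binom{p-k}{k-r} = \exp((k-r)\log p\,(1+o(1)))$ under the stated scaling, so the expected number of competitors with deficit $k-r$ that beat the truth has exponent
\begin{equation*}
g(k-r) \defn (k-r)\log p - \tfrac{n}{2}\log\!\big(1 + (k-r)/(2\sigma^2)\big).
\end{equation*}
The \emph{key structural observation} is that $g$ is \emph{convex} in the deficit $k-r$, since $\log p$ is linear while $-\log(1+\cdot)$ is convex; hence on $\{1,\dots,k\}$ its maximum is attained at an endpoint. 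The binding endpoint is the fully-mismatched one $k-r=k$, where $g(k)=k\log p-\tfrac n2\log(1+k/(2\sigma^2))$ vanishes precisely at $n_0$ in the regime $\sigma^2/k\to0$ (there $\log(1+k/(2\sigma^2))\sim\log(1+2k/\sigma^2)$, so the constant inside the logarithm is asymptotically irrelevant). For $n$ exceeding $n_0$, convexity forces $g(k-r)\le-\Omega(k\log p)$ uniformly over deficits $k-r=\Omega(k)$, so summing the per-competitor probabilities and invoking Markov shows that with high probability no competitor with overlap deficit $\Omega(k)$ beats the truth; the minimizer therefore has deficit $o(k)$.

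\textbf{The non-recovery direction} (below the threshold, giving $\tfrac1k\norm{w_0^\star-w_{\mathrm{true}}}_0\to1$) is where a second-moment argument is required and where I expect the \emph{main obstacle} to lie. Here $g(k)>0$, so the expected number $N$ of fully-mismatched competitors beating the truth is exponentially large; to convert this into the statement that such a competitor actually exists I would apply the Paley--Zygmund inequality, which demands $\E{}{N^2}\le(1+o(1))\,\E{}{N}^2$. The difficulty is that distinct competitors share both the noise $E$ and overlapping columns of $X$, so their loss gaps are correlated; controlling $\E{}{N^2}$ becomes a delicate pairwise computation, summing over the overlap between two candidate supports and showing that the off-diagonal (correlated) terms do not dominate the diagonal. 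Once the count concentrates, the convexity of $g$ again suppresses every intermediate overlap class exponentially relative to the $r\approx0$ endpoint, so the global minimizer concentrates at zero overlap, producing the sharp all-or-nothing jump.

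The hypotheses are exactly what make the two moment bounds meet at the single curve $n_0$: $k\to\infty$ and $\sigma^2/k\to0$ render the $o(1)$ terms in both the large-deviation rate and the entropy count uniform, while $k\log k\le Cn$ guarantees that the neglected sub-exponential factors (the $2^k$ sign choices, the $1/(k-r)!$ in the binomials, and the ridge perturbation) are absorbed. The $\epsilon$-slack then places a vanishing window around $n_0$ within which neither bound is claimed, and outside of which exactly one of the first- and second-moment estimates is active.
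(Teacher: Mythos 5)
First, a point of reference: the paper you were given does \emph{not} prove this statement --- Theorem~\ref{thm:gamarnik} is quoted from \citet{gamarnik2017high}, so there is no internal proof to compare against. Your sketch can only be judged against the moment-method architecture of that cited work, which is indeed the strategy you outline. On that basis, you get the easier half essentially right and leave the harder half genuinely open.

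The recovery direction of your proposal is sound. The Chernoff computation is correct: conditioning on $Xu \sim N(0, \norm{u}_2^2 \eye{n})$ and optimizing the tilt at $\lambda = 1/(4\sigma^2)$ gives the per-competitor probability $(1+v/(4\sigma^2))^{-n/2}$, and combined with the entropy count and your convexity-in-the-deficit observation the union bound closes; with $\sigma^2/k\to 0$ the binding endpoint $d=k$ reproduces $n_0$ up to the asymptotically irrelevant constant inside the logarithm. Minor patches are needed: support overlaps with flipped sign contribute $4$ to $\norm{u}_2^2$ rather than $2$, so the overlap classes are indexed by two integers rather than one (this only helps you); the estimate $\binom{k}{r}\binom{p-k}{k-r}2^{k-r} = \exp((k-r)\log p\,(1+o(1)))$ requires $\log k = o(\log p)$; the paper's ``$n\geq(1-\epsilon)n_0$'' must be read as ``$n\geq(1+\epsilon)n_0$'' for the union bound (or any argument) to close; and your restriction of $w_0^\star$ to sign-valued vectors matches the binary-coefficient setting of \citet{gamarnik2017high} but is not literally the real-valued minimizer of problem \eqref{eq:l0-regression:primal}, a discrepancy inherited from the paper's loose restatement.

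The genuine gap is the non-recovery direction, and it is twofold. First, you invoke Paley--Zygmund and correctly identify the correlation obstacle (shared noise $E$ and shared columns of $X$ across competitors), but you do not carry out the second-moment computation; this is exactly where the difficulty of the cited result is concentrated, and it is resolved there by a delicate conditional argument, not a routine calculation. Second, and more fundamentally, even a completed Paley--Zygmund step would not yield the stated conclusion. Showing that with high probability some zero-overlap competitor beats the \emph{truth} only proves that the minimizer is not $w_{\mathrm{true}}$; it does not show $\frac1k\norm{w_0^\star - w_{\mathrm{true}}}_0 \to 1$, because the minimizer could a priori sit in an intermediate-overlap class. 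Your exponent $g(d)$ controls the expected number of competitors in class $d$ whose loss is below $L(w_{\mathrm{true}})$, where $L(w)\defn\norm{Y-Xw}_2^2$; localizing the minimizer at vanishing overlap instead requires comparing $\min\set{L(w)}{w \text{ in class } d}$ against $\min\set{L(w)}{w \text{ with zero overlap}}$ for every intermediate $d$, i.e., high-probability \emph{lower} bounds on the within-class minima that beat the \emph{upper} bound furnished by the second moment near $d\approx k$. That comparison is not implied by convexity of the first-moment exponent, so your sentence ``the global minimizer concentrates at zero overlap'' is an assertion rather than a deduction. This within-class comparison is the core of the all-or-nothing phenomenon and remains unproved in your proposal.
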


Although the following theorem holds for unregularized sparse regression ($\gamma \to \infty$), the same holds for other appropriately chosen values of the regularization parameter as well. Interestingly, \citet{gamarnik2017high} the proof technique of Theorem \ref{thm:gamarnik} might give additional intuitive insight with regard to the phase transition phenomena with respect to the statistical accuracy and computational complexity of exact sparse regression problem, which we will now empirically report on.

\begin{figure}
\begin{center}
    \includegraphics[width=1\textwidth]{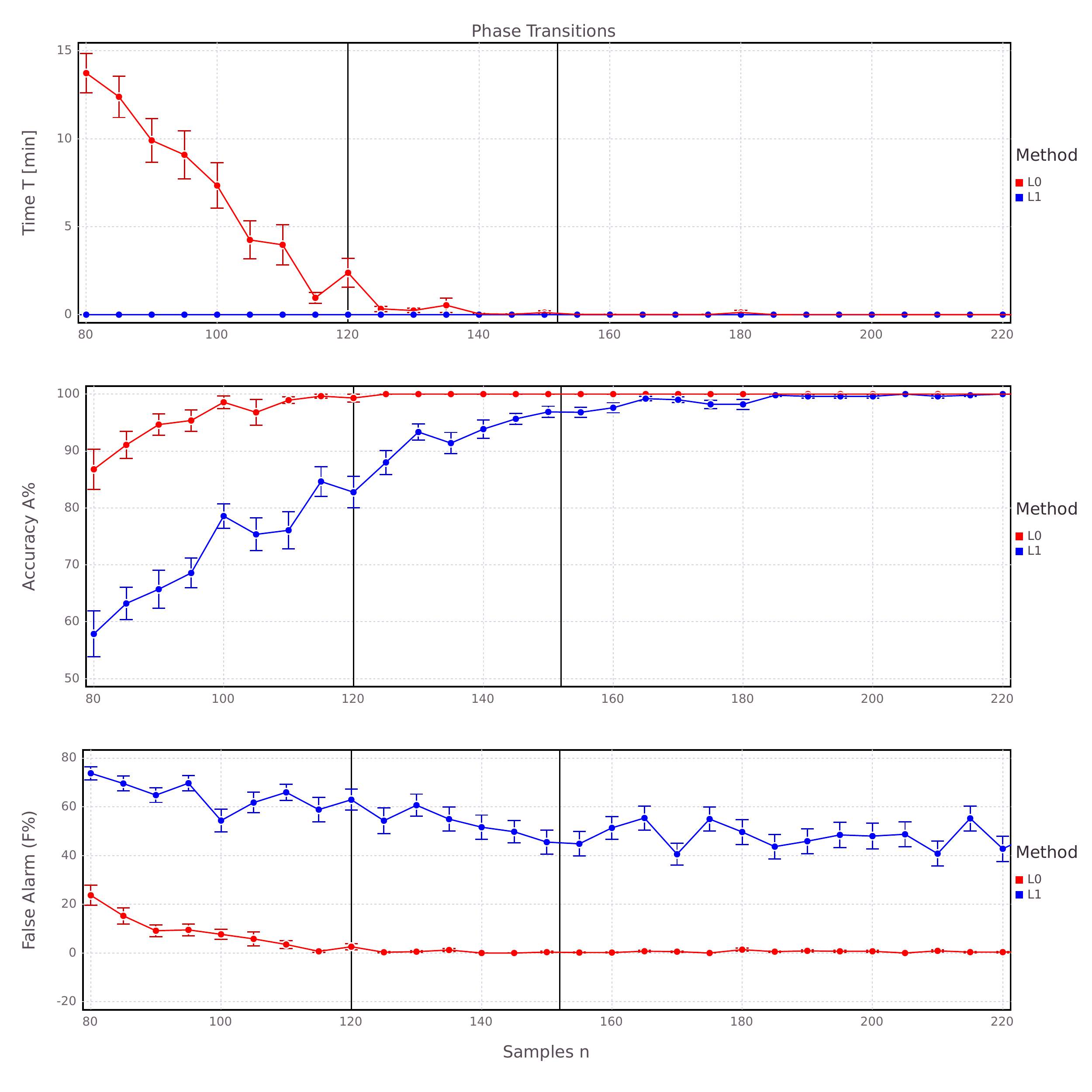}
\end{center}
\caption{A comparison between exact sparse regression using our cutting plane algorithm and the approximate \texttt{Lasso} heuristic on uncorrelated data ($\rho=0$) with noise ($\sqrt{SNR}=20$) counting $p=2,000$ regressors of which only $k=10$ are relevant. In the top panel we depict the time in minutes necessary to solve the sparse regression problem using either method as a function of the number of samples. The panel below gives the corresponding accuracy $A\%$ of the regressors as a function of the number of samples. The red vertical line at $n_1=152$ samples depicts the accuracy phase transition concerning the ability of the \texttt{Lasso} heuristic to recover the support of the ground truth $w_{\mathrm{true}}$. The blue vertical line at $n_t=120$ does the same for exact sparse regression. The final panel indicates the ability of both methods to reject obfuscating features in terms of the false alarm rate $F\%$. It can thus be seen that exact sparse regression does yields more statistically meaningful regressors (higher accuracy $A\%$ for less false alarms $F\%$) than the \texttt{Lasso} heuristic. Furthermore, a complexity phase transition can be recognized as well all around $n_t$.
}
\label{fig:cio_l1}
\end{figure}

In Figure \ref{fig:cio_l1}, we show empirical results for noiseless uncorrelated synthetically generated data with  $p=2,000$   of which only $k=10$ are relevant. The accuracy $A\%$ and false alarm rates $F\%$  using exact sparse regression as well as the \texttt{Lasso}  and time $T$ in minutes  to obtain either one are taken as the average values of fifty independent synthetic datasets. When the optimal solution is not found in less than fifteen minutes we take the best solution found up to that point. The error bars give an indication of one inter-sample standard deviation among these fifty independent experiments. The colored horizontal lines indicate that the number of samples $n$ after which either method returned   a full recovery ($A\%=100$) of the support of the ground truth when both are given the correct number $k$ of relevant sparse features. The \texttt{Lasso} heuristic is empirically found to require approximately $n=180$ samples to recover the true support which corresponds rather well with the theoretically predicted $n_1=152$ necessary samples by \citet{wainwright2009sharp}. Unsurprisingly, the related accuracy phase transition of exact sparse regression using Algorithm \ref{alg:outer_approximation} is found empirically to occur  at $n_t=120$ samples.

We now discuss the second transition which indicates that the time it takes to solve the sparse regression \eqref{eq:l0-regression:primal} using the cutting plane Algorithm \ref{alg:outer_approximation} experiences a phase transition as well. We seem to be the first to have seen this complexity phase transition likely due to the fact that scalable algorithms for exact sparse regression have historically been lacking. Nevertheless, the fact that the complexity of exact sparse regression might experience a phase transition has been allude to before. Contrary to traditional complexity theory which suggests that the difficulty of a problem increases with problem size, the sparse  regression problem  has the property that as the number of samples  $n > n_t$ increases the problem becomes easier in that the solution recovers 100\% of the true signal, and our approach solves the problem extremely fast (in fact faster than \texttt{Lasso}), while for small number of samples $n < n_t$ exact sparse regression seems impractical. It should be remarked that as $n_0 \approx 50 < n_t$ there still remains a region in which exact sparse regression is statistically relevant but computationally not feasible.

In all the experiments conducted up to this point, we assumed that the number of non-zero regressor coefficients $k$ of the ground truth $w_\mathrm{true}$ underlying the data was given. Evidently, in most practical applications the sparsity parameter $k$ needs to be inferred from the data as well. In essence thus, any practical sparse regression procedure must pick those regressors contributing to the response out of the obfuscating bulk. To that end, we introduced the false alarm rate $F\%$ of a certain solution $w^\star$ as the percentage of regressors selected which are in fact unfitting. The ideal method would of course find all contributing regressors ($A\%=100$) and not select any further ones ($F\%=0$). In practice clearly, a trade-off must sometimes be made. The final phase transition will deal with the ability of exact sparse regression to reject obfuscating irrelevant features using cross validation.

Historically, cross validation has been empirically found to be an effective way to infer the sparsity parameter $k$ from data. Hence, for both exact sparse regression and the \texttt{Lasso} heuristic, we select that number of non-zero coefficients which generalizes best to the validation sets constructed using cross validation with regards to prediction performance. In case of exact sparse regression, we let $k$ range between one and twenty whereas the true unknown number of non-zero regressors was in fact ten. The third plot in Figure \ref{fig:cio_l1} gives the false alarm rate $F\%$ of both methods in terms of the number of samples $n$. As can be seen, the \texttt{Lasso} heuristic has difficulty keeping a low false alarm rate with noisy data. Even in the region where the \texttt{Lasso} heuristic is accurate ($A\%$), it is not as sparse as hoped for. Exact sparse regression does indeed yield sparser models as it avoids including regressors that do not contribute to the observations.

\subsection{Parametric Dependency}
\label{ssec:param-depend}
To investigate the effect of each of the data parameters even further, we use synthetic data with the properties presented in Table \ref{table:parameters}. In order to be able to separate the effect of each parameter individually, we present the accuracy $A \%$, false alarm rate $F \%$ and solution time $T$ of our cutting plane algorithm as a function of the number of samples $n$ for each parameter value separately while keeping all other parameters fixed to their nominal value. All results are obtained as the average values of twenty independent experiments. The figures in the remainder of this section indicate that the accuracy, false alarm and complexity phase transitions shown in Figure \ref{fig:cio_l1} persist for a wide variety of properties of the synthetic data.

\begin{table}
\begin{center}
\begin{tabular}{llr}
 \hline
 Sparsity & $k$ & $\{10^\star, 15, 20\}$ \\
 Dimension & $p$ & $\{5000^\star, 10000, 15000\}$ \\
 Signal-to-noise ratio & $\sqrt{\mathrm{SNR}}$ & $\{3, 7, 20^\star\}$ \\
 \hline
\end{tabular}
\end{center}
\caption{Parameters describing the synthetic data used in Section \ref{ssec:param-depend}. The starred values denote the nominal values of each parameter.}
\label{table:parameters}
\end{table}

\subsubsection*{Feature dimension $p$}

As both phase transition curves \eqref{eq:wainwright} and \eqref{eq:gamarnik} depends only logarithmically on $p$, we do not expect the reported phase transitions to be very sensitive to the regressor dimension either. Indeed, in Figure \ref{fig:cio_p} only a minor influence on the point of transition between statistically meaningful and efficient sparse regression to unreliable and intractable regressors is observed as a function of  $p$. 

\begin{figure}
\begin{center}
    \includegraphics[width=0.85\textwidth]{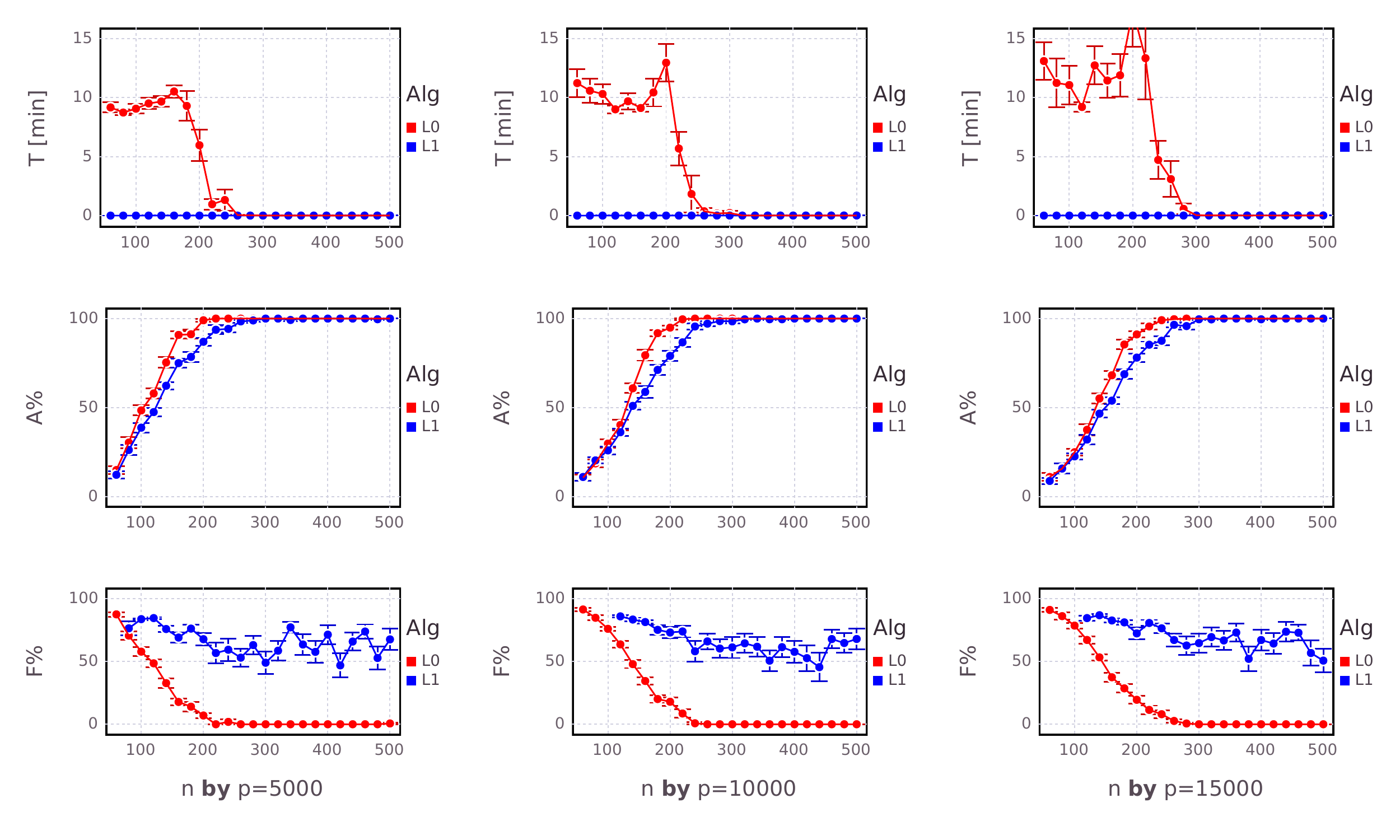}
\end{center}
\vspace{-10pt} 
\caption{The   top  panel  shows the time it takes to solve the sparse regression problem using the cutting plane method for data with $p = 5,000$, $10,000$ or $15,000$ regressors 
 as a function of $n$. When the optimal solution is not found in less than ten minutes we take the best solution found up to that point. The bottom panels show the accuracy $A\%$ and false alarm rate $F\%$. Only a minor influence on the point of transition between statistically meaningful and efficient sparse regression to unreliable and intractable regression is observed as a function of the regression dimension $p$.
}
\label{fig:cio_p}
\end{figure}

\subsubsection*{Sparsity level $k$}

Figure \ref{fig:cio_k} suggests that $k$  has an important influence of the phase transition curve. The experiments suggest that there is a  threshold $f_t$ such that if $n/k \geq f_t$, then full support recovery $(A\%=100, \,F\%=0)$ occurs and the time to solve problem \eqref{eq:l0-regression:primal} is in the order of seconds and only grows linear in $n$. Furthermore, if $n/k < f_t$, then support recovery $A\%$ drops to zero, false alarms $F\%$ surge, while the time to solve problem \eqref{eq:l0-regression:primal} grows combinatorially as $\binom{p}{k}$.  This observation is in line with the theoretical result \eqref{eq:gamarnik}, which predicts that this threshold only depends logarithmically on the feature dimension $p$ and the \ac{snr} which we study subsequently.

\begin{figure}
\begin{center}
    \includegraphics[width=0.85\textwidth]{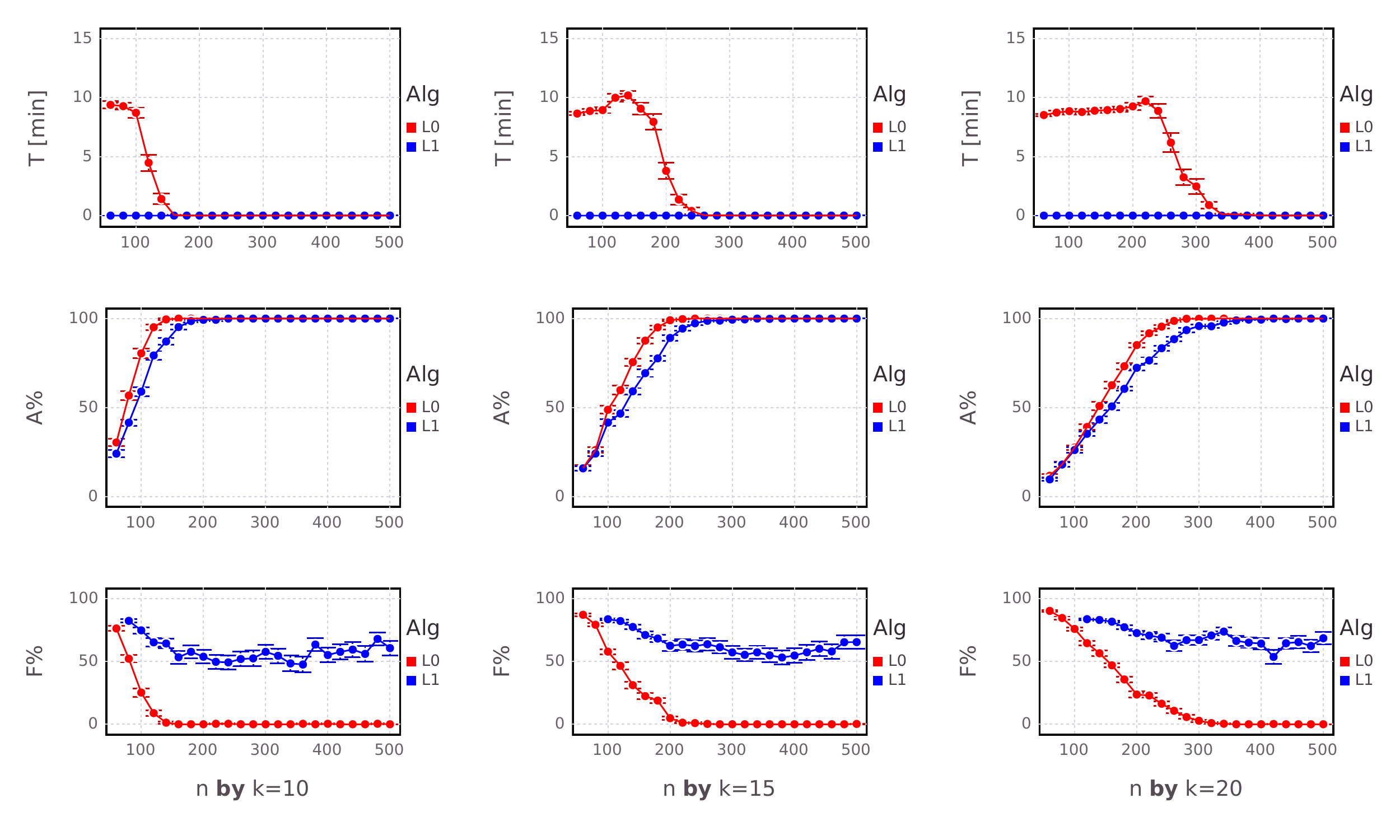}
\end{center}
\vspace{-10pt} 
\caption{The   top  panel shows the time it takes  to solve the sparse regression problem as a function of $n$
 using the cutting plane method for data with $p=5,000$ regressors of which only $k = 10$, $15$ or $k=20$ are relevant. When the optimal solution is not found in less than ten minutes we take the best solution found up to that point. The bottom panels show the accuracy $A\%$ and false alarm rate $F\%$. These results suggest that the quantity $n/k$ is a major factor in the phase transition curve of exact sparse regression.}
\label{fig:cio_k}
\end{figure}

\subsubsection*{Signal-to-noise ratio ($\mathrm{SNR}$)}

From an information theoretic point of view, the \ac{snr} must play an important role as well as reflected by the theoretical curve \eqref{eq:gamarnik}. Indeed, the statistical power of any method is  questionable when the noise exceeds the signal in the data. In Figure \ref{fig:cio_snr} this effect of noise is observed as for noisy data the phase transition occurs later than for more accurate data.

\begin{figure}
\begin{center}
    \includegraphics[width=0.85\textwidth]{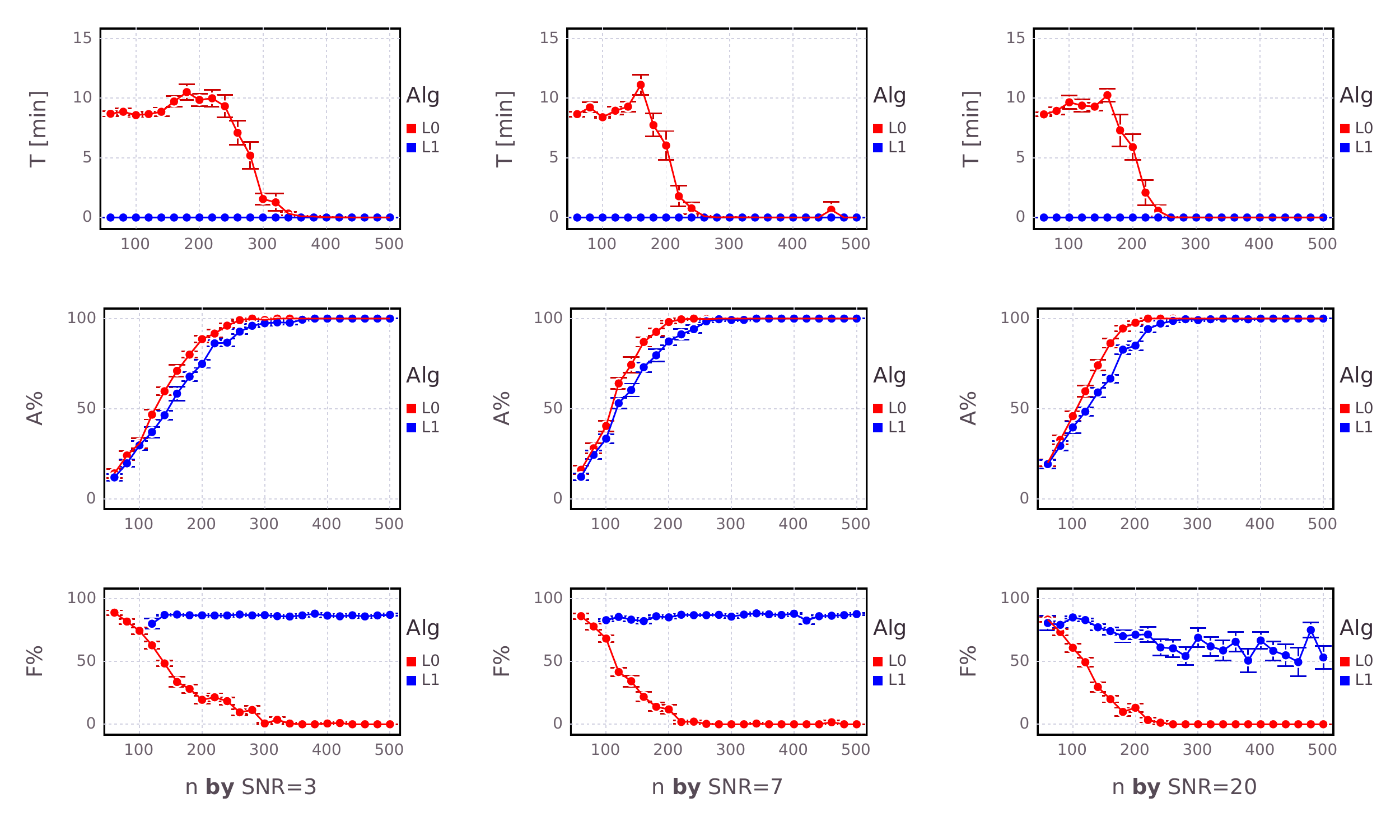}
\end{center}
\vspace{-10pt} 
\caption{The   top  panel  shows the time it takes to solve the sparse regression problem as a function of $n$
using the cutting plane method for data with signal-to-noise level $\sqrt{\mathrm{SNR}} = 3,$ $7$  and $20$.
When the optimal solution is not found in less than one minute we take the best solution found up to that point. The bottom panel   shows the accuracy $A\%$. }
\label{fig:cio_snr}
\end{figure}

\subsection{A remark on complexity}

The empirical results in this paper suggest that the traditional complexity point of view might be  misleading towards a better understanding of the complexity of the sparse regression problem \eqref{eq:l0-regression:primal}. Indeed, contrary to traditional complexity theory which suggests that the difficulty of a problem increases with dimension, the sparse  regression problem \eqref{eq:l0-regression:primal} has the property that for small number of samples $n$, our approach takes a  large amount of time to solve the problem. However, for a large number of samples $n$, our approach solves the problem extremely fast and recovers 100\% of the support of the true regressor $w_{\mathrm{tru
e}}$. 

 \section{The road towards nonlinear feature discovery}
 \label{sec:nonlinear}

In this section, we discuss an extension of the sparse linear regression to the case of nonlinear regression
by augmenting the input data $X$ with  auxiliary nonlinear transformations.  In fact, the idea of nonlinear regression as linear regression to lifted data  underpins kernel methods. Kernel methods can in a primal perspective be viewed as Tikhonov regularization between the observations $Y$ and transformed versions $\psi(x_i)$ of the original data samples. The feature map $\psi(\cdot)$ encodes which nonlinearities should be detected. 

To illustrate the idea  
we augment each of the $p$ original regressors with the  following nonlinear transformations: 
 \begin{equation}
 \label{eq:temp12}
  x, ~\sqrt{\abs{x}},~\log \abs{x}, ~x^2,~x^3,~\cos(10 \pi x),~ \sin(x),~\tanh(2 x).
  \end{equation}
The method could be made   more general by allowing for nonlinear products between variables but we abstain from doing so for the sake of simplicity. To enforce a sparse regression model, we demand that the final regressor can only depend on $k$ different (potentially nonlinear) features. 

 Instead of solving   problem \eqref{eq:l0-regression:primal}, we then solve its nonlinear version 
 \begin{equation}
 \label{eq:l0-regression:nonlinear}
 \begin{array}{rl}
 	\min 	& \frac{1}{2\gamma} \norm{\tilde w}^2_2 + \frac{1}{2} \norm{Y - \psi(X) \tilde w}_2^2 \\[0.5em]
 	\st    	& \norm{\tilde w}_0 \leq k,
 \end{array}
 \end{equation}
 where the matrix $\psi(X)$ in $\Re^{n \times f}$ consists of the application of the transformations in \eqref{eq:temp12} to the input matrix $X$. The nonlinear sparse regression problem \eqref{eq:l0-regression:nonlinear} can be dealt with in an identical manner as its linear counterpart \eqref{eq:l0-regression:primal}. Notice that the dimension of the nonlinear regressor $\tilde w$ is potentially much larger than its linear counterpart $w$. 
 \begin{corollary}[Sparse nonlinear regression]
 \label{cor:cio_nonlinear}
 The sparse regression problem \eqref{eq:l0-regression:nonlinear} can be reformulated as the nonlinear optimization problem 
 \begin{equation*}
 \begin{array}{rl}
 	\min_{s\in \S^f_k} 	& \displaystyle \frac12 Y\tpose \left(\eye{n} + \gamma \textstyle\sum_{j\in[f]} s_j K_j \right)^{-1} Y
 \end{array}
 \end{equation*}
 where  $ K_j \defn \psi_j(X) \psi_j(X) \tpose.$
 \end{corollary}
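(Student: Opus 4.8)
The plan is to observe that the nonlinear problem \eqref{eq:l0-regression:nonlinear} is, up to relabeling, an instance of the linear sparse regression problem \eqref{eq:l0-regression:primal}, and then to invoke Theorem \ref{thm:cio} directly. First I would record the correspondence: setting the data matrix to be the lifted matrix $\psi(X)$ in $\Re^{n\times f}$ in place of $X$ in $\Re^{n\times p}$, the regressor to be $\tilde w$ in $\Re^f$ in place of $w$ in $\Re^p$, and the feature count to be $f$ in place of $p$, the objective $\frac{1}{2\gamma}\norm{\tilde w}_2^2 + \frac12 \norm{Y - \psi(X)\tilde w}_2^2$ together with the cardinality constraint $\norm{\tilde w}_0 \leq k$ is term-for-term the objective and constraint of \eqref{eq:l0-regression:primal}. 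Thus the two problems coincide under this substitution, with the sole difference that the ambient dimension of the regressor has grown from $p$ to $f$.

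Next I would apply the argument of Theorem \ref{thm:cio} verbatim to this relabeled problem. That proof splits the regressor into its support $s$ and nonzero entries, solves the inner ridge regression in closed form via Lemma \ref{lemm:convexity}, and writes the resulting loss as $c(\sum_j s_j K_j)$ with $K_j = X_j X_j^\top$ the dyadic product of the $j$-th column of the data matrix. Carrying out the identical steps with $\psi(X)$ in the role of $X$, the $j$-th column is now $\psi_j(X)$, so the micro kernels become $K_j = \psi_j(X)\psi_j(X)^\top$ in $\S_+^n$, while the support variable now ranges over $\S_k^f$ rather than $\S_k^p$. Reading off the resulting objective reproduces exactly the claimed formulation.

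The only point requiring care — and the nearest thing to an obstacle — is to confirm that nothing in the derivation of Lemma \ref{lemm:convexity} or Theorem \ref{thm:cio} exploited any special structure of the data matrix beyond its being an arbitrary real matrix. Indeed, the characterizations \eqref{eq:char1}--\eqref{eq:char3} of the regression loss $c$ and the support/value splitting in \eqref{eq:reformulation} hold for any matrix in $\Re^{n\times f}$; the fact that the columns of $\psi(X)$ are correlated or algebraically dependent nonlinear transforms of the original inputs plays no role whatsoever. Hence the reformulation is agnostic to how the feature matrix is produced, and the corollary follows immediately by relabeling, with the growth from $p$ to $f$ affecting only the size of the selection set $\S_k^f$ and the number of micro kernels, not the validity of the argument.
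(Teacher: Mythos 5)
Your proposal is correct and matches the paper's own treatment: the paper states the corollary as an immediate consequence of Theorem \ref{thm:cio}, noting that "the only material difference" is the definition of the kernel matrices $K_j$, which is precisely your relabeling of $X \to \psi(X)$, $p \to f$, $K_j \to \psi_j(X)\psi_j(X)\tpose$. Your added check that neither Lemma \ref{lemm:convexity} nor Theorem \ref{thm:cio} exploits any structure of the data matrix beyond its being an arbitrary element of $\Re^{n\times f}$ is exactly the right point to verify, and it holds.
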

Note that he only material difference between  Corollary \ref{cor:cio_nonlinear} and Theorem \ref{thm:cio} is the definition of  kernel matrices $K_j$.

 As an illustration of the nonlinear approach described above, consider observations and data coming from the following nonlinear model
 \begin{equation}
 \label{eq:nonlinear_model}
 	Y = 3 \sqrt{\abs{X_4}} -2 X_2^2 + 4 \tanh(2 X_3) + 3 \cos(2 \pi X_2) -2 X_1 + a X_1 X_2 + E.
 \end{equation}
 We assume  that the input data $X$ and noise $E$ is generated using the method outlined in Section \ref{ssec:data_description}. That is, the signal-to-noise ratio was chosen to be $\sqrt {\mathrm{SNR}}=20$ to simulate the effect of noisy data. For simplicity we assume the original data $X$ to be uncorrelated ($\rho =0$). 
   An additional 16 regressors are added to obfuscate the four relevant regressors in the nonlinear model \eqref{eq:nonlinear_model}. The input data after the nonlinear transformations
   in \eqref{eq:temp12} comprised a total of $f=160$ nonlinear features. We consider two distinct nonlinear models for corresponding parameter values $a=0$ and $a=1$. Notice that for the biased case $a=1$, the term $a X_1 X_2$ will prevent our nonlinear regression approach to find the true underlying nonlinear model \eqref{eq:nonlinear_model} exactly.
 
 We state the results of our nonlinear regression approach applied to the nonlinear model \eqref{eq:nonlinear_model} for both $a=0$ and $a=1$ in Table \ref{table:nonlinear}. All reported results are the median values of five independent experiments. Cross validation on $k$ ranging between one and ten was used to determine the number of regressors considered. Determining the best regressor for each $k$ took around ten seconds, thus making a complete regression possible in a little under two minutes.  As currently outlined though, our nonlinear regression approach is not sensitive to nonlinearities appearing as feature products and consequently it will treat the term $a X_1 X_2$ as noise. Hence, the number of underlying regressors we can ever hope to discover is five. For $a=0$,   200 samples suffice to identify the correct nonlinearities and features. For $a=1$ 
  Table \ref{table:nonlinear}  reports an increased false alarm rate compared to $a=0$.

 \begin{table}
   \begin{center}
     \begin{tabular}{| l | l | c c c c c |}
       \cline{2-7}
       \multicolumn{1}{c|}{} & Quality $w^\star$ & $n = 100$ & $n = 200$ & $n = 300$ & $n = 400$ & $n = 500$\\
       \hline 
       $a = 0$ & $(A\%, F\%)$ & (100, 38) & (100, 0) & (100, 0) & (100, 0) & (100, 0)\\
       \hline
       $a = 1$ & $(A\%, F\%)$ & (80, 50) & (100, 17) & (100, 17) & (100, 28) & (100, 17) \\
       \hline
     \end{tabular}
   \end{center}
   \vspace{0.5em}
   \caption{For the nonlinear model \eqref{eq:nonlinear_model} and for $a=0$,   $n=200$  suffice to identify the correct features. 
     For  $a=1$,    $A\%=100$ for $n\geq 200$, but    $F\%>0$.}
   \label{table:nonlinear}
 \end{table}

The method proposed here serves only as an illustration. 
Off course no method can aspire to discover arbitrary nonlinearities without sacrificing its statistical power. 
We believe that this constitutes a promising new road towards nonlinear feature discovery in data. With additional research, we believe that it can become a fierce and more disciplined competitor towards the more ``black box'' approaches such as neural networks.

\section{Conclusions}
\label{sec:conclusions}

We  presented a novel binary convex reformulation  and a novel cutting plane algorithm that solves to provable optimality exact sparse regression problems for instances with sample sizes and regressor dimensions well in the 100,000s. This  presents an improvement of two orders of magnitude compared to known exact sparse regression approaches and takes away the computational edge attributed to sparse regression heuristics such as the \texttt{Lasso} or \texttt{Elastic Net}.

The ability to solve sparse regression problems for very high dimensions allows us to observe new phase transition phenomena. Contrary to   complexity theory which suggests that the difficulty of a problem increases with problem size, the sparse regression problem has  the property that as    $n$ increases, the problem becomes easier in that the solution perfectly recovers the support of the true signal, and our approach solves the problem extremely fast (in fact faster than \texttt{Lasso}), whereas for small   $n$, our approach takes a  large amount of time to solve the problem. 
We  further provide preliminary evidence  that our  methods  open a new road towards nonlinear feature discovery based on sparse selection from a potentially huge amount of desired nonlinearities.

\section*{Acknowledgements}

The second author is generously supported by the Early Post.Mobility fellowship No.\ 165226 of the Swiss National Science Foundation.

\setlength\bibitemsep{10pt}
\printbibliography

\end{document}